\titleformat{\section}[block]{\normalfont\bfseries\filcenter}{\itshape\thesection}{0.9em}{}
\titleformat{\subsection}[block]{\normalfont\bfseries}{\itshape\thesubsection}{0.6em}{}
\titleformat{\subsubsection}[block]{\normalfont\bfseries}{\itshape\thesubsubsection}{0.3em}{}
\newcommand{\bgamma}{\bar{\gamma}}
\newcommand{\bbu}{\mathbf{u}}
\newcommand{\tp}{\tilde{p}}
\newcommand{\hp}{\hat{p}}
\newcommand{\tX}{\tilde{X}}
\newcommand{\E}{\mathbb{E}}
\newcommand{\R}{\mathbb{R}}
\newcommand{\mP}{\mathcal{P}}
\newcommand{\mL}{\mathcal{L}}
\newcommand{\p}{\partial}
\renewcommand{\p}{\partial}
\newcommand{\la}{\langle}
\newcommand{\ra}{\rangle}
\renewcommand{\l}{\left}
\renewcommand{\r}{\right}
\renewcommand{\d}{{\rm d}}
\numberwithin{equation}{section}
\newtheorem{theoreme}{Theorem}[section]
\newtheorem{Proposition}[theoreme]{Proposition}
\newtheorem{lemme}[theoreme]{Lemma}
\newtheorem{remarque}{Remark}
\newtheorem{claim}[theoreme]{Claim}
\begin{document}
\title[Strong well-posedness of MKV-SDE with Hölder drift]{Strong well-posedness of McKean-Vlasov stochastic differential equation with Hölder drift}

\author{P.E. Chaudru de Raynal}
% Same address as R. Campbell
\address{UNIVERSITE SAVOIE MONT BLANC, LAMA. }
\email[]{pe.deraynal@univ-savoie.fr}

\keywords {McKean-Vlasov processes; smoothing effect; non-linear PDE; regularisation by noise} 

\begin{abstract}
In this paper, we prove pathwise uniqueness for stochastic systems of McKean-Vlasov type with singular drift, even in the measure argument, and uniformly non-degenerate Lipschitz diffusion matrix. 

Our proof is based on Zvonkin's transformation \cite{zvonkin_transformation_1974} and so on the regularization properties of the associated PDE, which is stated on the space $[0,T]\times \R^d\times \mathcal{P}_2(\R^d)$, where $T$ is a positive number, $d$ denotes the dimension equation and $\mathcal{P}_2(\R^d)$ is the space of probability measures on $\R^d$ with finite second order moment. Especially, a smoothing effect in the measure direction is exhibited. Our approach is based on a parametrix expansion of the transition density of the McKean-Vlasov process.
\end{abstract}

\maketitle

\section{Introduction}
Let $\mathcal{M}_{d}(\mathbb{R})$ be the set of $d\times d$ matrices with real coefficients and $\mathcal{P}_{2}(\R^d)$ be the space of probability measures $\nu$ on $\R^d$ such that $\int x^2 \d \nu(x) <+\infty$. For any random variable $Z$, let us denote by $[Z]$ its law and for any measurable function $\varphi$ let us write $\int \varphi \d \nu$ with its dual notation: $\langle \varphi, \nu\rangle$.

For a positive number $T$, for given measurable functions $b,\sigma: [0,T] \times \mathbb{R}^d \times \mathbb{R}  \to \mathbb{R}^d \times \mathcal{M}_{d}(\mathbb{R})$, $\varphi_i : \R^d\to \R$, $i$ in $\{1,2\}$ and for $(B_{t}, t\geq 0)$ a standard $d$-dimensional Brownian motion defined on a filtered probability space $(\Omega, \mathcal{F}, \mathbb{P}, (\mathcal{F}_t)_{ t \geq 0})$, we consider, for $t<s$ in $[0,T]^2$ and $\mu$ in $\mathcal{P}_2(\R^d)$, the non-linear (in a McKean-Vlasov sense) system
\begin{equation}\label{ttarget1}
X_s^{t,\mu} = X_t + \int_t^s b(r,X_r^{t,\mu},\langle \varphi_1, [X^{t,\mu}_r]\rangle ) \d r + \int_t^s\sigma(r,X_r^{t,\mu},\langle \varphi_2, [X^{t,\mu}_r]\rangle) \d B_r,\quad X_t \sim \mu.
\end{equation}

This sort of equation arises as the limit of system of interacting players. This happens as follows. Suppose that we are given a large number of players with symmetric dynamic and whose positions depend on the positions of the other players in a mean field way. Then,  when the number of players tends to infinity, there is a propagation of chaos phenomenon so that the limit dynamic of each player does not depend on the positions of the others anymore, but only on their statistical distributions. This obviously comes from the law of large numbers. The resulting system is then of the form of \eqref{ttarget1} and is called non-linear, since the dynamic of the player depends on its own law. We refer to the notes of Sznitman's lecture at Saint-Flour \cite{sznitman_topics_1991} for an overview on the topic.\\

As done in \cite{sznitman_topics_1991}, the proof of strong existence and uniqueness (which means that the solutions are adapted to the filtration generated by the Brownian motion and are almost surely indistinguishable) for this equation relies on classical fixed point argument and so, on the Lipschitz property of the coefficients of the equation, the Lispchitz regularity being understood with respect to (w.r.t.) the Wasserstein metric in the case of the measure argument. 

In this work, we aim at proving the strong well-posedness of such a system when the diffusion matrix is Lipschitz w.r.t. both the space and measure arguments and uniformly non-degenerate but when the drift is only a bounded in space and Hölder continuous w.r.t. the measure function in the following sense: the drift $b$ is asking to be bounded in space and Lipschitz w.r.t. the third argument but the map $\varphi_1$ is supposed to be only Hölder-continuous. This means that when rewriting the drift of \eqref{ttarget1} as the map $B: [0,T]\times \R^d \times  \mathcal{P}_2(\R^d) \ni (t,x,\nu) \mapsto B(t,x,\nu) = b(t,x,\langle\varphi_1,\nu\rangle) \in \R^d$, the function $B$ is only assumed to be Hölder-continuous w.r.t. the measure argument (for the Wasserstein distance) and bounded in space.

To the best of our knowledge, this result is new and relies on regularization by noise phenomenon (see \cite{flandoli_random_2011} for a survey). This effect comes from the random perturbation of the equation, allowing stochastic differential system to be well-posed in a strong or a weak sense, under a larger set of assumptions than ordinary differential system. It thus deeply relies on the noise propagation through the system which is the reason why we suppose the matrix $\sigma\sigma^*$ to be uniformly non-degenerate. Our work then consists, on the one hand, to show that this phenomenon still holds in the McKean-Vlasov setting and, on the other hand, even occurs in the measure space. This last result is quite unexpectable at first sight, since the noise does not act in that direction. We nevertheless show that is is indeed the case, thanks to the structural assumption made on the dependence of the system w.r.t. the measure argument (\emph{i.e.} when the dependence upon the measure is of polynomial type).

To do so, we adapt to our framework the Zvonkin transformation \cite{zvonkin_transformation_1974}. This approach relies on smoothing properties of a well-chosen PDE associated to the system \eqref{ttarget1}. Here, the non-linearity (in a Mckean-Vlasov sense) leads to a particular class of PDE that can be seen as the linear version of the so called Master Equation coming from Mean Field Games theory introduced independently by Lasry and Lions \cite{lasry_jeux_2006,lasry_jeux_2006-1, lasry_mean_2007} and by Huang, Caines and Malhameé \cite{huang_large_2006}. This PDE has been recently studied from a probabilistic point of view in the independent works of Buckdahn, Li, Peng and Rainer in \cite{Buckdahn_Mean-field_2014} and of Crisan, Chassagneux and Delarue in \cite{chassagneux_probabilistic_2014}. Its main particularity comes from the fact that it is stated on the space $[0,T]\times \R^d\times \mathcal{P}_2(\R^d)$ so that it involves derivatives in the measure direction.

The smoothing properties of the associated PDE is then the crucial part of the proof. We investigate it under a larger set of assumptions: we let the diffusion matrix be only a Hölder-continuous function of the space and measure argument. The investigations are done by using  a Feynman-Kack representation of the solution of the PDE and then a parametrix expansion (see \cite{mckean_jr._curvature_1967}) of the transition density of the solution of \eqref{ttarget1}. This brings us to investigate for all $t<s$ in $[0,T]^2$ the regularity of $\mathcal{P}_2(\R^d) \ni \mu \mapsto \langle \phi, [X_s^{t,\mu}]\rangle \in \R$ for some Hölder continuous function $\phi : \R^d \to \R$ and where $X$ is a solution of \eqref{ttarget1}. Especially, as it has been announced above, a smoothing effect w.r.t. measure argument (\emph{i.e.} on the initial data ``$\mu$'') is exhibited. \\

Finally, let us emphasize that the parametrix approach relies on perturbation approach. This explains why, in comparison with the results obtained in the linear case (see \cite{zvonkin_transformation_1974,veretennikov_strong_1980}), the coefficients of \eqref{ttarget1} are assumed to be Hölder continuous and not only bounded functions of the space and measure arguments.

\subsection*{Organization of this paper.} Our paper is organized as follows: we present below our main assumptions and result. Then, we give in Section \ref{SoP} the mathematical background and our strategy of proof. Especially, we state in this section the PDE associated to \eqref{ttarget1}. Since the PDE is stated on the Cartesian space $[0,T]\times\R^d \times \mathcal{P}_2 (\R^d)$, we give some notions on differentiation of functions along a measure. Then, we establish the smoothing properties of the PDE  associated to \eqref{ttarget1},  which is a key result in the proof of our main Theorem.

Next, we investigate the smoothing properties of the PDE. Such an investigation is done under regularized framework. It is based on a parametrix expansion of the transition density of \eqref{ttarget1} and is presented as follows. In Section \ref{EotS} we give estimates on the transition density of \eqref{ttarget1} and on the mapping $v: \mathcal{P}_2(\R^d) \ni \mu \mapsto \langle \phi, [X_s^{t,\mu}]\rangle \in \R$ for some Hölder continuous function $\phi : \R^d \to \R$. This permits to estimate the solution of the PDE. Then, estimates on $v$ are proven in Section \ref{Eov} and estimates on the transition density of \eqref{ttarget1} are proven in Section \ref{Parametrix}. As said before, this last follows from a parametrix representation of the transition density of \eqref{ttarget1} and its estimation. Auxiliary results are given in Appendixes \ref{sec:prooflemmas} and \ref{APPC}.\\

\subsection*{Notations, assumptions and main result}

\textbf{Notations.} For any function $f : E\times F \times G \to \R^N$, we denote by $\p_1$ (resp. $\p_2$ and $\p_3$) the differentiation w.r.t. the first (resp. second and third) variable. When we add a subscript in the operator $\p_\cdot$, it stands for the variable on which the differentiation operator acts. We recall that the law of a random variable $X$ is denoted by $[X]$. The superscript ``$*$'' stands for the transpose, the canonical Euclidean inner product on $\R^d$ is denoted by ``$\cdot$''. We denote by $\mathcal{M}_d(\R)$ the set of $d\times d$ matrices with real coefficients and the trace of a matrix $M$ in $\mathcal{M}_d(\R)$ is denoted by ${\rm Tr}(M) = \sum_{j=1}^dM_{jj}$. We let $C, C', c, c',\tilde{C}, \tilde{C}',\ldots$ be some positive constants  depending only on known parameters in \textbf{(HE)} given below, that may change from line to line and from an equation to another and we add a subscript $T$ in the constant if it depends also on the length of the interval.\\

\textbf{Assumptions (HE).} We say that assumptions \textbf{(HE)} hold if the following assumptions are satisfied:
\begin{description}
\item[(HE1) regularity of the drift]  there exists a positive constant $C_b$ such that $||b||_{\infty} < C_b$. Moreover for all $(t,x)$ in $[0,T] \times \R^d$, the mapping $b(t,x,\cdot) : \R \ni w \mapsto b(t,x,w)$ is differentiable and $||\p_3 b||_{\infty} < C_b'$. Finally, the mapping $\varphi_1 : \R^d \ni x \mapsto \varphi_1(x)$ is supposed to be $\alpha_1$-Hölder for some $0<\alpha_1\leq 1$.\\

\item[(HE2) regularity of the diffusion matrix] there exists a positive constant $C_\sigma$ such that for all $t$ in $[0,T]$,
$$\forall x,x' \in \R^d,\ w,w'\in \R,\ |\sigma(t,x,w) - \sigma(t,x',w')|\leq C_\sigma \left(|x-x'|+|w-w'|\right).$$  
Moreover for all $(t,x)$ in $[0,T] \times \R^d$, the mapping $\sigma(t,x,\cdot) : \R \ni w \mapsto \sigma(t,x,w)$ is differentiable, $||\p_3 \sigma||_{\infty} < C_\sigma'$ and there exists a positive constant $C_\sigma''$ such that for all $t$ in $[0,T]$ and $w$ in $\R$,
$$\forall x,x' \in \R^d,\ |\p_3\sigma(t,x,w) - \p_3\sigma(t,x',w)|\leq C_\sigma'' |x-x'|^{\gamma_a'}.$$ 
Finally, the mapping $\varphi_2 : \R^d \ni x \mapsto \varphi_2(x)$ is supposed to be Lipschitz.\\

\item[(HE3) uniform ellipticity of $\sigma\sigma^*$] the function $\sigma\sigma^*$ satisfies the uniform ellipticity hypothesis:
\begin{equation*}
\exists \Lambda >1,\ \forall \zeta \in \mathbb{R}^{d}, \quad \Lambda^{-1}|\zeta|^2\leq \left[\sigma \sigma^*(t,x,w)\zeta\right] \cdot \zeta  \leq \Lambda |\zeta|^2,
\end{equation*}
for all $(t,x,w) \in  [0,T] \times \mathbb{R}^d \times \mathbb{R}$.\\
\end{description}

\textbf{Main result.} We can now state our main result:
\begin{theoreme}\label{MR}
Under assumptions \textbf{(HE)}, the system \eqref{ttarget1} admits a unique strong solution.\\
\end{theoreme}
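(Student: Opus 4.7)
The plan is to combine a weak existence argument (mostly classical) with pathwise uniqueness obtained through a Zvonkin-type transformation adapted to the McKean-Vlasov setting; pathwise uniqueness is the heart of the statement. For weak existence: since $\sigma$ is Lipschitz (in the measure argument via the Lipschitz $\varphi_2$) and uniformly elliptic, while $b$ is bounded, I would iterate in the flow of marginal laws. At each step the SDE has bounded measurable drift and Lipschitz non-degenerate diffusion, hence is weakly well-posed by the Stroock-Varadhan martingale formulation; tightness of the iterates together with the continuity of $\mu\mapsto\langle\varphi_i,\mu\rangle$ in the Wasserstein topology lets one pass to the limit and produces a weak solution of \eqref{ttarget1}.

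For pathwise uniqueness, let $X^1,X^2$ be two solutions driven by the same Brownian motion on the same probability space with identical initial data, and set $m^i_t:=[X^i_t]$. Consider the linear PDE on $[0,T]\times\R^d\times\mathcal{P}_2(\R^d)$
\begin{equation*}
\p_t u + \tfrac12\mathrm{Tr}\bigl(\sigma\sigma^*\p_x^2 u\bigr) + B\cdot\p_x u + B = 0,\qquad u(T,\cdot,\cdot)=0,
\end{equation*}
with $B(r,x,\mu):=b(r,x,\langle\varphi_1,\mu\rangle)$ and $\sigma(r,x,\mu):=\sigma(r,x,\langle\varphi_2,\mu\rangle)$. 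The smoothing estimates developed in Sections~\ref{EotS}--\ref{Parametrix} guarantee that $\p_x u$ is bounded and Hölder in $x$ uniformly in $t$ and, more importantly, that $u$ and $\p_x u$ are Lipschitz in $\mu$ for the Wasserstein distance. On a sufficiently short interval $[0,T_0]$, the sup-norm of $\p_x u$ is smaller than $1/2$, so that $\Phi(t,x,\mu):=x+u(t,x,\mu)$ is a bi-Lipschitz homeomorphism in $x$ uniformly in $(t,\mu)$. Applying Itô's formula to $Y^i_t:=\Phi(t,X^i_t,m^i_t)$ absorbs the singular $x$-drift and produces an SDE for $Y^i$ whose coefficients are Lipschitz in space and whose measure dependence enters only through the flow $m^i$. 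Writing the equation for $Y^1-Y^2$ and combining Burkholder-Davis-Gundy with the Wasserstein-Lipschitz dependence yields
\begin{equation*}
\E\sup_{s\leq t}|Y^1_s-Y^2_s|^2 \leq C\int_0^t \E|X^1_r-X^2_r|^2\,\d r;
\end{equation*}
the bi-Lipschitz property of $\Phi$ transfers this into a Gronwall inequality in $\E\sup|X^1-X^2|^2$ and forces $X^1=X^2$ on $[0,T_0]$, with iteration extending the conclusion to $[0,T]$.

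The crux --- and the main obstacle --- is the Lipschitz regularity in $\mu$ of the PDE solution $u$. Since $\varphi_1$ is only $\alpha_1$-Hölder, a naive estimate produces only a $W_2^{\alpha_1}$ modulus in $\mu$, which is insufficient to close the Gronwall loop above. The resolution is the smoothing effect in the measure direction announced in the introduction: through a parametrix expansion of the transition density of \eqref{ttarget1} and a perturbative analysis of its derivative, one shows that for Hölder $\phi$ the map $\mu\mapsto\langle\phi,[X_s^{t,\mu}]\rangle$ is differentiable with a time singularity of order $(s-t)^{-(1-\eta)}$, which remains integrable in $s$. Integration in time then upgrades the Hölder modulus to a Lipschitz one and renders the Zvonkin transformation effective. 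This phenomenon is unexpected, since the noise does not act directly on the measure argument; it is made possible by the assumed polynomial structure of the measure dependence through the test functions $\langle\varphi_i,\mu\rangle$.
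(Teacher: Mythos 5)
Your proposal captures the essential mechanism correctly --- the Zvonkin transformation $\Phi(t,x,\mu)=x+u(t,x,\mu)$, the crucial observation that the Lipschitz-in-$\mu$ estimate for $u$ and $\p_x u$ comes from the smoothing effect in the measure direction, and the small-time absorbing step --- but the route differs from the paper's in two places, one cosmetic and one that hides a real technical cost. The paper does not separate weak existence from pathwise uniqueness: it runs a Picard scheme $(X^\mu)^m\to (X^\mu)^{m+1}$, where each step is a \emph{linear} SDE whose coefficients depend on the previous iterate and its marginal flow, applies the measure-chain-rule Itô formula of \cite{chassagneux_probabilistic_2014} to $(X^\mu)^{m+1}-\bbu^n(t,(X^\mu_t)^{m+1},[(X^\mu_t)^{m+1}])$, and extracts a Cauchy estimate whose constant is small for small $T$. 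Strong existence and uniqueness drop out together, with no appeal to tightness, the martingale problem, or a McKean-Vlasov Yamada-Watanabe theorem --- the last of which your scheme implicitly needs and which is itself nontrivial in this setting. Second, and more substantively, you work directly with a solution $u$ of the Hölder-coefficient PDE and assert its $C^1$-regularity in $\mu$; the paper never constructs such a classical solution. It works exclusively with the regularized solutions $\bbu^n$ (for which \cite{chassagneux_probabilistic_2014} gives classical well-posedness), proves the bounds of Theorem \ref{MRPDE} uniformly in $n$, and pays for the mismatch via the explicit error term $\mathcal{R}^m_t(n)$, which is then sent to zero. If you want to avoid the Picard-plus-mollification bookkeeping and use a limit $u$ as in your argument, you must first justify (e.g.\ via Arzel\`a--Ascoli on $\bbu^n$ and its derivatives, which the paper sketches parenthetically) that a mild solution with the stated Lipschitz-in-$\mu$ property exists; as written this step is asserted rather than established. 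Once that is supplied, your two-solution Gronwall argument goes through --- noting that the ``bi-Lipschitz'' transfer from $Y$ to $X$ also involves the $W_2$-Lipschitz term in $\mu$, which must be absorbed by the $CT^{\delta}$ smallness, exactly as in the paper's single Picard step.
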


\begin{remarque}\label{remarqueMR}
We emphasize that the result can be extended to functions $\varphi_i$ depending on time and space: the same arguments lead to the same result if the dependence of the coefficients w.r.t. the law are of the form $b(t,x,\nu) = \la\varphi_1(t,x,\cdot),\nu_t\ra$ and $\sigma(t,x,\nu) = \la\varphi_2(t,x,\cdot),\nu_t\ra$, where $(t,x)$ lies in $[0,T]\times \R^d$ and   where $(\nu_{t})_{0\leq t \leq T}$ is a family of probability measures on $\mathcal{P}_2(\R^d)$. Obviously, in order to match our assumptions, the map $\varphi_1$ is then supposed to be bounded in space and Hölder w.r.t. its third argument and the non-degeneracy assumption on the diffusion matrix has to be understood for the matrix $\left[\la\sigma(t,x,\cdot), \nu_t\ra\right] \left[\la\sigma(t,x,\cdot),\nu_t\ra\right]^*$ uniformly in $x$, $t$  in $\R^d\times \R^+$ and $\nu$ in the family of probability measures on $\R^d$.
\end{remarque}

\section{Mathematical background and strategy of proof}\label{SoP}
\subsection{The Zvonkin transformation}
For usual differential equations, it could be a very hard task to show the well posedness outside the Lipschitz framework, at least in the classical sense (see \cite{diperna_ordinary_1989} for some work in that direction). Nevertheless, when the differential system is perturbed by noise there is a phenomenon, called \emph{regularization by noise}, that allows to recover the well posedness. When the SDE is linear (in a Mckean-Vlasov sense), this has been studied first by Zvonkin \cite{zvonkin_transformation_1974} and then generalized by several authors \emph{e.g.} \cite{veretennikov_strong_1980,krylov_strong_2005,zhang_stochastic_2011} and \cite{flandoli_random_2011} for a survey. All these results rely on smoothing properties of an associated PDE and so on smoothing properties of elliptic and linear partial second order differential operator. Let us briefly explain how.\\

The strategy to recover the Lipschitz property consists in exhibiting a Zvonkin-like transformation of the equation. Let us forget for the moment the dependence of the solution of \eqref{ttarget1} w.r.t. its own law in order to illustrate the main argument. If we denote by $\mathcal{A}$ the generator of \eqref{ttarget1}, the idea is to obtain \emph{a priori} estimates on the solution of the PDE
\begin{eqnarray}\label{PDEsimple}
\p_t \bbu + \mathcal{A} \bbu = b,\text{ on }[0,T)\times \R^d,\quad \bbu_T = 0_{\R^d},
\end{eqnarray}
when $b$ and $\sigma$ are smooth functions, but depending only on regularity of $b$, $\sigma$ assumed in \textbf{(HE)}. 

This allows to consider a sequence $(\bbu^n)_{n\geq 0}$ of classical solutions of the PDE \eqref{PDEsimple} along a sequence of mollified coefficients $((\sigma\sigma^*)_n, b_n)_{n \geq 0}$. Then, by applying Itô's formula on $X_t - \bbu^n(t,X_t)$ we can remove the drift of the equation and recover an SDE whose coefficients have Lipschitz constants uniformly on the regularization procedure, so that, when letting the regularization procedure tend to infinity, the estimates pass through the limit.

When these constants can be chosen as small as $T$ is small (which follows from the boundary condition in \eqref{PDEsimple}), we then recover existence and uniqueness on small time intervals. If in addition the constants do not degenerate with the time, we can iterate the procedure and then recover existence and uniqueness on $\R^+$.\\

The smoothing properties of the PDE \eqref{PDEsimple}, are, in fact, the crucial points. It is well known that such smoothing properties are related to the noise propagation in the associated SDE through all the directions of the space. Hence, two issues arise from the non linear framework studied here: how the operator $\mathcal{A}$ looks like in our Mckean-Vlasov case, and how to regularize in the measure direction since the noise does not act in that direction. 

\subsection{PDE on space of probability measure}
Roughly speaking, we have to find a PDE that reflects the Markov structure of the underlying process. Here, the Markov property has to be understood on the space $[0,T]\times \R^d \times \mathcal{P}_2(\R^d)$, so that it seems natural to consider a PDE on this space. This sort of PDE has been recently studied independently by Buckdahn, Li, Peng and Rainer in \cite{Buckdahn_Mean-field_2014} and Crisan, Chassagneux and Delarue in \cite{chassagneux_probabilistic_2014}, it is called the Master Equation and it appears naturally when considering Mean-Field Games. What follows is essentially inspired by the second work \cite{chassagneux_probabilistic_2014}, from which we adopted some of the notations.\\

Before giving this PDE, stated on the space $[0,T]\times \R^d \times \mathcal{P}_2(\R^d)$, let us give some notions of differentiation of functions along a probability measure. The one used here has been introduced by Lions  during its lecture at the \emph{Collège de France} and can be found in Cardaliaguet's note \cite{cardaliaguet_notes_2010}. The strategy of Lions consists in lifting the function $V : \mathcal{P}_2(\R^d) \ni \nu \mapsto V(\nu) \in \R$ to a function $\mathcal{V} : \mathbb{L}_2(\Omega,\mathcal{F},\mathbb{P})\ni Z \mapsto \mathcal{V}(Z) \in \R$, $Z$ being a random variable of law $\nu$. We can then take advantage of the Hilbert structure of the $\mathbb{L}_2$ space and define, in the Frechet sense, the mapping $D\mathcal{V}$. Thanks to Riezs' representation Theorem, we can identify $D\mathcal{V}(Z)$ as $DV(\nu)(Z)$. Thus, we call the derivative of $V$ w.r.t. the law, and we denote by $\p_\nu V(\nu)$, the mapping in $\mathbb{L}_2(\R^d,\nu;\R^d)$:
$$\p_\nu V(\nu) : \R^d \ni z \mapsto \p_\nu V(\nu)(z) \in \R^d.$$

Let us emphasize that, in our case, the law interaction appears as the action of the law on some function $\varphi_i : V(\nu) = \int \varphi_i (x) \d \nu(x)$. Using the lifting argument described above we get that for any random variable $X$ and $H$ in $\mathbb{L}_2(\Omega)$:
$$V([X+\epsilon H]) = \E[\varphi(X+\epsilon H)] = \E[\varphi(X)] + \epsilon \E[\varphi'(X) \cdot H] + o(\epsilon),$$
so that 
$$\p_\nu \langle \varphi_i,\nu\rangle : \R^d \ni z \mapsto  \varphi_i'(z) \in  \R^d.$$
Finally, let us just notice that this definition justifies the choice of the space $\mathcal{P}_2(\R^d)$ for the initial data in \eqref{ttarget1}.\\

We can now state the PDE of interest:
\begin{equation}\label{targetPDE}\displaystyle
\left\lbrace\begin{array}{lll}
\displaystyle(\p_t +\mathcal{A}) \bbu(t,x,\mu) =b(t,x,\langle \varphi_1, \mu\rangle),\text{ on }[0,T]\times \R^d\times \mathcal{P}_2(\R^d).\\
\displaystyle \bbu(T,x,\mu) =0_{\R^d},
\end{array}\right.
\end{equation}
where, when setting $a:=\sigma\sigma^*$ the operator $\mathcal{A}$ is given by: for any smooth enough function $\psi : \R^+\times \R^d \times \mathcal{P}_2(\R^d)\to \R^d$

\begin{eqnarray*}
\mathcal{A}\psi(t,x,\mu) &=& \frac{1}{2}{\rm Tr}\left[a(t,x,\langle \mu,\varphi_2\rangle) \p_x^2 \psi(t,x,\mu)\right] + b(t,x,\langle \varphi_1, \mu\rangle)\p_x \psi(t,x,\mu) \\
&&+ \int b(t,x,\langle\varphi_1, \mu\rangle)\p_\mu \psi(t,x,\mu)(z) \d \mu(z) + \frac{1}{2}\int {\rm Tr}\left[a(t,x,\langle \varphi_2, \mu\rangle)\p_z(\p_\mu \psi(t,x,\mu)(z))\right] \d \mu(z).
\end{eqnarray*}
When the coefficients are smooth, it follows from \cite{chassagneux_probabilistic_2014} that such a PDE admits a classical solution $\bbu$. We refer to the aforementioned paper for more explanations on the meaning of classical solution and especially on the question about the regularity of $\p_\mu \bbu$ as an element of $\mathbb{L}_2(\R^d,\mu;\R^d)$ w.r.t. the variable $z$ and $\mu$.\\

\subsection{Smoothing properties of the PDE}
It thus remains to show the smoothing properties of the PDE. This is done under the following assumptions.

\textbf{Assumptions} $\mathbf{(H\mathcal{E})}$. We say that assumptions $\mathbf{(H\mathcal{E})}$ hold if Assumptions \textbf{(HE)} are satisfied with assumption $\mathbf{(HE2)}$ replaced by
\begin{description}
\item[(H$\mathcal{E}$2)  regularity of the diffusion matrix] there exists a positive constant $C_\sigma$ such that for all $t$ in $[0,T]$, for all $w$ in $\R$
$$\forall x,x' \in \R^d,\ |\sigma(t,x,w) - \sigma(t,x',w)|\leq C_\sigma |x-x'|^{\gamma_a},$$
for some $0<\gamma_a\leq 1$. Moreover for all $(t,x)$ in $[0,T] \times \R^d$, the mapping $\sigma(t,x,\cdot) : \R \ni w \mapsto \sigma(t,x,w)$ is differentiable, $||\p_3 \sigma||_{\infty} < C_\sigma'$ and there exists a positive constant $C_\sigma''$ such that for all $t$ in $[0,T]$ and $w$ in $\R$,
$$\forall x,x' \in \R^d,\ |\p_3\sigma(t,x,w) - \p_3\sigma(t,x',w)|\leq C_\sigma'' |x-x'|^{\gamma_a'},$$
for some $0<\gamma_a'\leq 1$. Finally, the mapping $\varphi_2 : \R^d \ni x \mapsto \varphi_2(x)\in \R$ is supposed to be $\alpha_2$-Hölder continuous, $0<\alpha_2\leq 1.$\\
\end{description}

This means that we let the diffusion coefficient $a$ be only a Hölder-continuous function w.r.t. the space and law variable. We emphasize that $\mathbf{(HE)}$ implies $\mathbf{(H\mathcal{E})}$.\\

In order to apply the Zvonkin's transformation, we do not need to solve the whole system of PDE. We can indeed regularize it and then exhibit a Lipschitz bound on the regularized solution and its space derivative depending only on known parameters in $\mathbf{(H\mathcal{E})}$.

In our context, it is possible to mollify the coefficients $b,a$ and the functions $\varphi_i$, $i=1,2,$ to obtain a sequence of smooth coefficients $(b_n,a_n)_{n\geq 1}$ (say bounded and infinitely differentiable with bounded derivatives of all order), and functions $(\varphi_1^n,\varphi_2^n)_{n\geq 1}$ (infinitely differentiable with bounded derivatives of all order greater than 1) that converges uniformly to $b,a$ and $\varphi_i$, $i=1,2$. If we denote by $\mathcal{A}^n$ the regularized version of the operator $\mathcal{A}$ and $\bbu^n$ the solution of the regularized version of \eqref{targetPDE}, we have the following result:

\begin{theoreme}\label{MRPDE}
For each $n$, the regularized system of PDEs \eqref{targetPDE} (\emph{i.e.} with $\mathcal{A}^n$ and $b_n$ instead of $\mathcal{A}$ and $b$) admits a unique classical solution $\bbu^n$, in the sense defined in \cite{chassagneux_probabilistic_2014}. Moreover, there exists a positive $\mathcal{T}_{\ref{MRPDE}}$, a positive constant $C_{\ref{MRPDE}}$ and a positive number $\delta_{\ref{MRPDE}}$ depending only on known parameters in $\mathbf{(H\mathcal{E})}$, such that, for all $(t,x,\mu)$ in $[0,T]\times \R^d \times \mathcal{P}_2(\R^d)$ and all $n$ in $\mathbb{N}^*$ the mapping $\bbu^n$ satisfies:
$$\left|\p_\mu \bbu^n(t,x,\mu) (z) \right| + \left|\p_\mu(\p_x \bbu^n(t,x,\mu) (z)) \right| + \left| \p_x \bbu^n(t,x,\mu) \right| + \left|\p^2_x \bbu^n(t,x,\mu)\right| \leq C_{\ref{MRPDE}} T^{\delta_{\ref{MRPDE}}},$$
for all $z$ in $\R^d$ and for $T$ less than $\mathcal{T}_{\ref{MRPDE}}$ 
\end{theoreme}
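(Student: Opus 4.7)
The plan is to combine a Feynman--Kac representation of $\bbu^n$ with the density estimates of Section~\ref{Parametrix} and the measure-derivative estimates on the map $v$ from Section~\ref{Eov}. Denoting by $X^{n,t,x,\mu}$ the decoupled process (the regularized SDE starting at the deterministic point $x$ at time $t$ and using the family of marginals $([X^{n,t,\mu}_r])_{r\geq t}$ as frozen measure input), the smoothness of the mollified coefficients and the results of~\cite{chassagneux_probabilistic_2014} give
\begin{equation*}
\bbu^n(t,x,\mu) = -\E\int_t^T b_n\!\left(r,\, X_r^{n,t,x,\mu},\, \la \varphi_1^n,\,[X^{n,t,\mu}_r]\ra\right) \d r,
\end{equation*}
so that all four derivatives in the statement arise by differentiating this expression.

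For the space derivatives, I would differentiate under the integral sign against the transition density $p^n$ of the decoupled process, writing
\begin{equation*}
\p_x^k \bbu^n(t,x,\mu) = -\int_t^T\!\!\int_{\R^d} b_n\!\left(r,y,\la\varphi_1^n,[X^{n,t,\mu}_r]\ra\right) \p_x^k p^n(t,r,x,\mu;y)\,\d y\,\d r,\qquad k=1,2.
\end{equation*}
The parametrix expansion of Section~\ref{Parametrix} yields Gaussian-type pointwise bounds on $\p_x^k p^n$ that are uniform in the regularization. Direct integration gives a $(T-t)^{1/2}$ factor for $k=1$. For $k=2$, the a priori non-integrable singularity $(r-t)^{-1}$ is removed by the standard cancellation $\int_{\R^d}\p_x^2 p^n\,\d y = 0$: one replaces $b_n(r,y,\cdot)$ by $b_n(r,y,\cdot)-b_n(r,x,\cdot)$ and absorbs the difference against the Gaussian tail through the $\gamma_a$-H\"older modulus provided by the parametrix, turning $(r-t)^{-1}$ into the integrable $(r-t)^{-1+\gamma_a/2}$ and producing a $(T-t)^{\delta}$ bound.

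The main novelty and chief obstacle is $\p_\mu \bbu^n$. Along the $\mathbb{L}_2$-lifting, the chain rule produces two contributions: the dependence of $X_r^{n,t,x,\mu}$ on $\mu$ through its frozen coefficients, and the dependence of $[X_r^{n,t,\mu}]$ on $\mu$ inside $\la\varphi_1^n,\cdot\ra$. Using the bounds on $\p_3 b$ and $\p_3\sigma$ in $\mathbf{(H\mathcal{E})}$, both contributions reduce to controlling
\begin{equation*}
\p_\mu \la \phi,[X_r^{n,t,\mu}]\ra(z),\qquad \phi\in\{\varphi_1^n,\varphi_2^n\} \text{ only H\"older continuous},
\end{equation*}
which is precisely the mapping $v$ studied in Section~\ref{Eov}. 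The naive bound would involve the derivative of $\varphi_i^n$, whose sup norm blows up with $n$; instead, the parametrix representation lets one integrate by parts so as to transfer the $\mu$-differentiation onto the Gaussian kernel, producing an integrable singularity $(r-t)^{-1/2}$ which, combined with the $(r-t)^{\alpha_1/2}$ factor coming from the H\"older modulus of $\varphi_1^n$, integrates in $r$ to a positive power of $T$. The mixed derivative $\p_\mu \p_x \bbu^n$ is treated in the same way using the joint $(x,\mu)$-derivatives of $p^n$ from Section~\ref{Parametrix}.

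Collecting the four estimates yields $|\p_x\bbu^n|+|\p_x^2\bbu^n|+|\p_\mu\bbu^n|+|\p_\mu\p_x\bbu^n|\leq C T^{\delta}$ for some $\delta>0$, with all constants depending only on the parameters appearing in $\mathbf{(H\mathcal{E})}$ thanks to the uniformity of the parametrix bounds under mollification. The hardest ingredient is unquestionably the uniform-in-$n$ smoothing of $\mu\mapsto[X_r^{n,t,\mu}]$: since the noise does not act on the measure variable, smoothing in this direction is not a priori clear, and it relies critically on the polynomial structure $\la\varphi_i,\nu\ra$ of the $\mu$-dependence which makes the integration-by-parts above legitimate.
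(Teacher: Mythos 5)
Your proposal matches the paper's approach: Feynman--Kac representation $\bbu^n(t,x,\mu)=-\E\int_t^T b_n\,\d r$, parametrix-based pointwise Gaussian bounds on $p^n$, $\p_\mu p^n$, $\p_\mu\p_x p^n$ (Proposition~\ref{prop:estidensity}), the uniform-in-$n$ control of $\p_\mu\la\phi,[X_s^{t,\mu}]\ra$ (Proposition~\ref{prop:estiv}) exploiting that $p^n$ integrates to one, and classical parabolic estimates for $\p_x\bbu^n$, $\p_x^2\bbu^n$ with $\mu$ treated as a frozen parameter. One caveat worth noting: your cancellation argument for $\p_x^2\bbu^n$ replaces $b_n(r,y,\cdot)$ by $b_n(r,y,\cdot)-b_n(r,x,\cdot)$ and appeals to a \emph{spatial} H\"older modulus of $b$, which is not among the assumptions ($b$ is only bounded in $x$; the $\gamma_a$-H\"older gain in the parametrix comes from the diffusion coefficient $a$); the paper sidesteps this by citing Friedman for the space-derivative bounds, so this part of your argument would need a different justification.
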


The smoothness of the solution in space is not new. This phenomenon is well-known and follows from the ellipticity assumption assumed on $a$.  What is more unexpected is that there are bounds obtained uniformly on the regularization procedure on the measure derivatives. Indeed, the coefficients of the PDE are not differentiable w.r.t. the argument $\mu$ and it is clear that any differentiation of $\bbu^n$ w.r.t. $\mu$ should involves the differentiation of the source term $b$ w.r.t. this argument. So, by the chain rule, the bound should contain an estimate on the derivative of $\mathcal{P}_2(\R^d) \ni \nu \mapsto \langle \varphi_i^n,\nu \rangle$, which is given by $(\varphi_i^n)'$, so that this estimate should depend on the regularization procedure. 

Nevertheless, it appears that for all $t< s$ in $[0,T]^2$ the derivative of the mapping $\mu \mapsto \langle\varphi_i, [X_s^{t,\mu}] \rangle$ can be estimated in terms of known parameters in $\mathbf{(H\mathcal{E})}$ (in fact, combining an additional estimate on $\p_z \p_\mu \bbu$ together with Arzelà-Ascoli Theorem, we are able to show that the estimate on $\p_\mu \bbu$ holds for the mild solution of \eqref{targetPDE}).

We hence have a smoothing property in the measure space without any action of the Laplacian in that direction. This follows from the fact that the function $\varphi_i$ is integrated against the law of the process so that there still is a Gaussian convolution of the initial data ``$\mu$'' at any time $s>t$. Therefore, we recover the spatial smoothing.

To the best of our knowledge, this result is new, especially since we do not add any noise on the space of measure. This last aspect has been studied in \cite{carmona_mean_2014} where Mean Field Games with common noise are investigated. Roughly speaking, the Authors showed that common noise on the original system of interacting players translates into McKean-Vlasov system with random law (the family of probability measure of the underlying stochastic process having now a stochastic dynamic) which allows them to recover existence and  uniqueness of Nash equilibrium.

Also, in the same spirit as us, David R. Ba\~{n}os studies in \cite{banos_bismut-elworthy-li_2015} the Malliavin differentiability of processes having the same dynamic as \eqref{ttarget1} with Lipschitz coefficients. Although he does not consider explicitly a regularization phenomenon in the measure direction (the functions $\varphi_i$ are continuously differentiables with bounded Lipschitz derivatives), he shows that the space regularization phenomenon still holds so that the mapping $x \mapsto \langle \phi, [X_s^{t,\delta_x}]\rangle \in \R$, $\phi$ in $\mathbb{L}_2([X_s^{t,\delta_x}])$ is weakly differentiable for any $s>t$ thanks to a stochastic perturbation approach of Bismut type.\\

Finally, let us emphasize that Remark \ref{remarqueMR} also applies for Theorem \ref{MRPDE}. It seems that the main structural assumption that allows to recover the spatial smoothing property in the measure direction comes from the particular ``polynomial '' dependence of the coefficients w.r.t. the measure. 

%Finally, let us notice that in the recent work \cite{banos_bismut-elworthy-li_2015} David R. Ba\~{n}os  studies the Malliavin differentiability of processes having the same dynamic as \eqref{ttarget1} with Lipschitz coefficients. Although he does not consider explicitly regularization phenomenon in the measure direction (the functions $\varphi_i$ are continuously differentiable with bounded Lipschitz derivatives), he shows that the space regularization phenomenon still holds so that the mapping $x \mapsto \langle [X_s^{t,\delta_x}], \phi\rangle \in \R$, $\phi$ in $\mathbb{L}_2([X_s^{t,\delta_x}])$ is weakly differentiable for any $s>t$ thanks to a stochastic perturbation approach of Bismut type. \\

\section{Proof of the main result}\label{PMR}

We prove our main result by using a Picard's iteration. Let $m$ be a positive integer, set $(X_t^{\mu})^0 = X_0$ for all $t$ in $[0,T]$ and define $(X_t^\mu)^{m+1}$ as the solution of:
$$(X_t^\mu)^{m+1} = X_0 + \int_0^t b(r,(X_r^{\mu})^m,\langle \varphi_1, [(X_r^{\mu})^m]\rangle ) \d r + \int_0^t\sigma(r,(X_r^{\mu})^m,\langle \varphi_2, [(X_r^{\mu})^m]\rangle) \d B_r,\quad X_0 \sim \mu.$$
In order to remove the singular drift, we now have to use an Itô's Formula that matches our framework, \emph{i.e.} stated on $[0,T]\times \R^d \times \mathcal{P}_2(\R^d)$. This formula, involving chain rule for functions defined on $\mathcal{P}_2(\R^d)$, can be found in Section 3 of \cite{chassagneux_probabilistic_2014}. By applying it to 
$$(X_t^{\mu})^{m+1} - \bbu^n(t,(X_t^{\mu})^{m+1},[(X_t^{\mu})^{m+1}]),$$
we obtain that
\begin{eqnarray}\label{VirtualSDE}
(X_t^{\mu})^{m+1} &=& X_0 - \bbu^n(0,X_0,\mu) - \bbu^n(t,(X_t^{\mu})^{m+1},\left[(X_t^{\mu})^{m+1}\right]) \\
&&+ \int_0^t  \sigma(s,(X_s^{\mu})^m,\langle \varphi_2,[(X_s^{\mu})^{m}] \rangle) \left[1-\p_2\bbu^n(s,(X_s^{\mu})^{m+1},\left[(X_s^{\mu})^{m+1}\right])\right] \d B_s + \mathcal{R}_t^m(n),\notag
\end{eqnarray}
where
\begin{eqnarray*}
\mathcal{R}_t^m(n) &=&  \int_0^t b_{n}(s,(X_s^{\mu})^{m},\langle \varphi_1^n,[(X_s^{\mu})^{m}] \rangle)-b(s,(X_s^{\mu})^{m},\langle \varphi_1,[(X_s^{\mu})^{m}] \rangle)\\
&& + (\mathcal{A}^{n}-\mathcal{A})\bbu^n(s,(X_s^{\mu})^{m+1},[(X_s^{\mu})^{m+1}])) \d s.
\end{eqnarray*}
Hence,
\begin{eqnarray*}
|(X_t^{\mu})^{m+1} -(X_t^{\mu})^{m}| &\leq & |\bbu^n(t,(X_t^{\mu})^{m+1},\left[(X_t^{\mu})^{m+1}\right]) -\bbu^n(t,(X_t^{\mu})^{m},\left[(X_t^{\mu})^{m}\right]) | \\
&& + \Bigg|\int_0^t\bigg\{  \sigma(s,(X_r^{\mu})^m,\langle \varphi_2,[(X_s^{\mu})^{m}] \rangle)\left[ 1 - \p_2\bbu^n(s,(X_s^{\mu})^{m+1},\left[(X_s^{\mu})^{m+1}\right])\right]\\
&&\quad  -\sigma(s,(X_s^{\mu})^{m-1},\langle \varphi_2,[(X_s^{\mu})^{m}] \rangle) \left[1-\p_2\bbu^n(s,(X_s^{\mu})^{m},\left[(X_s^{\mu})^m\right])\right]\bigg\}\d B_s\Bigg| \\&& + |\mathcal{R}_t^{m}(n)|+|\mathcal{R}_t^{m-1}(n)|.\notag
\end{eqnarray*}

Let us now emphasize that when $T<\mathcal{T}_{\ref{MRPDE}}$, Theorem \ref{MRPDE} implies that for all $(t,x)$ in $[0,T]\times \R^d$, all $Z$, $Z'$ in $\mathbb{L}_2(\Omega)$:
\begin{eqnarray*}
&&\left|\p_2^l\bbu^n(t,x,[Z]) - \p_2^l\bbu^n(t,x,[Z'])\right|\\
&& \quad  \leq\left| \int_0^1 \E\big[\p_3 \p_2^l \bbu^n(t,x,[(1-\lambda)Z+ \lambda Z']) (Z-Z') \big] \d \lambda\right|\\
&&\quad \leq C_{\ref{MRPDE}} T^{\delta_{\ref{MRPDE}}} \E[|Z-Z'|].
\end{eqnarray*}
for $l=0,1$, so that for any $T$ less  than $\mathcal{T}_{\ref{MRPDE}}$, there exists a positive $\delta$, depending on known parameters in \textbf{(HE)}, such that:

\begin{eqnarray*}
\E\sup_{t\leq T}|(X_t^{\mu})^{m+1} -(X_t^{\mu})^{m}|^2 &\leq & C T^{\delta} \E\sup_{t\leq T} |(X_t^{\mu})^{m+1} -(X_t^{\mu})^{m} |^2 \\
&& + C\int_0^t \E \sup_{r\leq s}| (X_r^{\mu})^m-(X_r^{\mu})^{m-1}|^2 \d s  + 2\E |\mathcal{R}_T^{m}(n)|^2+2\E |\mathcal{R}_T^{m-1}(n)|^2,\notag
\end{eqnarray*}
Since for any $m$, $\E|\mathcal{R}_T^m(n)|^2$ tends uniformly to $0$ as $n$ tend to infinity, we can let $n$ tend to infinity in the right hand side of the equation above and we get that 

\begin{eqnarray*}
(1-C T^{\delta})\E\sup_{t\leq T} |(X_t^{\mu})^{m+1} -(X_t^{\mu})^{m}|^2 &\leq &  C_T\int_0^t \E \sup_{r\leq s} | (X_r^{\mu})^m-(X_r^{\mu})^{m-1}|^2 \d s.\notag
\end{eqnarray*}
Finally, we can find a positive $\mathcal{T}$ depending on known parameters in $\mathbf{(HE)}$ such that for any $T$ less than $\mathcal{T}$:
\begin{eqnarray*}
\E \sup_{t\leq T}|(X_t^{\mu})^{m+1} -(X_t^{\mu})^{m}|^2 &\leq & C_T \int_0^T \E \sup_{r\leq s} | (X_r^{\mu})^m-(X_r^{\mu})^{m-1}|^2 \d s.\notag
\end{eqnarray*}
By induction, we deduce that for any $T$ less than $\mathcal{T}$:
\begin{eqnarray*}
\E \sup_{t\leq T}|(X_t^{\mu})^{m+1} -(X_t^{\mu})^{m}|^2 &\leq & \frac{C_T^{m+1} T^{m}}{m!} \E \sup_{t\leq T} | (X_t^{\mu})^1-(X_t^{\mu})^{0}|^2 \leq C' \frac{C_T^{m+1} T^{m}}{m!}.\notag
\end{eqnarray*}
So that $(X_t^\mu)^m$ converges almost surely to a solution $X_t^\mu$ of \eqref{ttarget1}. We deduce the uniqueness part from the previous computations. We hence have existence and uniqueness of a solution on $[0,\mathcal{T}]$. We can then iterate the construction and obtain the result for all $T$ in $\R^+$.\\

\section{Estimation on the solution of the PDE: proof of Theorem \ref{MRPDE}}\label{EotS}

\textbf{Notations.} From now, we let $C, C', c, c',\tilde{C}, \tilde{C}',\ldots$ be some positive constants  depending only on known parameters in \textbf{(H$\mathcal{E}$)}.\\

Let us first reduce the problem. We emphasize that any component of the $d$-dimensional solution of the system of PDEs \eqref{targetPDE} above can be described by the solution of:
\begin{equation}\label{targetPDE2}\displaystyle
\left\lbrace\begin{array}{lll}
\displaystyle(\p_t + \mathcal{A})u(t,x,\mu)  =\tilde{b}(t,x,\langle \varphi_1,\mu\rangle),\text{ on }[0,T]\times \R^d\times \mathcal{P}_2(\R^d)\\
\displaystyle u(T,x,\mu) =0,
\end{array}\right.
\end{equation}
where $\tilde{b} : \R^+\times \R^d \times \R \to \R$ plays the role of one of the components of $b$. Hence, we only have to prove the estimates in Theorem \ref{MRPDE} for the function $u$ defined above. Next, we have that Theorem \ref{MRPDE} is stated under regularized framework. For the sake of clarity, we forget the superscript $n$ that follows from the regularization procedure in the following and we suppose that the following assumptions hold.

\textbf{Assumptions} $\mathbf{(H\mathcal{E}R)}$.  We say that assumptions $\mathbf{(H\mathcal{E}R)}$ hold if assumptions $\mathbf{(H\mathcal{E})}$ hold true and $b, \sigma, \varphi_1, \varphi_2$ are infinitely differentiable functions with bounded derivatives of all order, greater than 1 for the functions $\varphi_1$ and $\varphi_2$.\\

We know from \cite{chassagneux_probabilistic_2014}  that under \textbf{(H$\mathbf{\mathcal{E}}$R)} this PDE admits a unique classical solution. Let us now give a suitable representation of this solution.\\

Under assumptions $\mathbf{(H\mathcal{E}R)}$ it follows from the Sznitman's note \cite{sznitman_topics_1991} that equation \eqref{ttarget1} admits a unique strong solution. For any $(t,\mu)$ in $[0,T] \times \mathcal{P}_2(\R^d)$ and $x$ in $\R^d$, its flow is the solution on $[t,T]$ of 
\begin{equation}\label{ttarget2}
X_s^{t,x,\mu} = x + \int_t^s b(r,X_r^{t,x,\mu},\langle \varphi_1, [X^{t,\mu}_r]\rangle) \d r + \int_t^s\sigma(r,X_r^{t,x,\mu},\langle \varphi_2, [X^{t,\mu}_r]\rangle) \d B_r.
\end{equation}
Given the family of marginals $[X^{t,\mu}]: = ([X_s^{t,\mu}])_{t\leq s \leq T}$ of the solution of \eqref{ttarget1}, we can consider the stochastic system \eqref{ttarget2} as a linear system parametrized by the time dependent parameter $[X^{t,\mu}]$. We can then define for all $(s',y')$ in $[0,T] \times \R^d$ the process $X^{s',y',[X^{t,\mu}]}$ as the solution of \eqref{ttarget2} on $[s',T]$ with starting point $y'$ at time $s'$ and whose coefficients depend on $[X^{t,\mu}]$ and we denote by $\mL^{t,\mu}$ its generator.\\

It is then clear, thanks to the well posedness of \eqref{ttarget2} under $\mathbf{(H\mathcal{E}R)}$, that  $X^{t,x,[X^{t,\mu}]}=X^{t,x,\mu}$. Finally, from classical theory of linear SDEs, the flow $X^{t,x,[X^{t,\mu}]}$ admits a transition density $p$ which is also parametrized by $[X^{t,\mu}]$.

Since from the arguments of \cite{chassagneux_probabilistic_2014} we have that, for all $(t,x,\mu) \in [0,T]\times \R^d\times \mathcal{P}_2(\R)$, the solution of the PDE \eqref{targetPDE2} writes:
\begin{equation*}
u(t,x,\mu) = \E \int_t^T \tilde{b}(s,X_s^{t,x,\mu},\langle \varphi_1 , [X_s^{t,\mu}]\rangle) \d s,
\end{equation*}
we deduce from the previous discussion that
\begin{equation}\label{eq:repsol}
u(t,x,\mu) = \int_t^T \int_{\R^d} \tilde{b}(s,y,\langle \varphi_1, [X_s^{t,\mu}]\rangle) p([X^{t,\mu}];t,x;s,y) \d y \d s.
\end{equation}

In order to keep the notations clear, we only mention the dependence of $p$ w.r.t. the initial data $(t,\mu)$ of \eqref{ttarget1} in the following and we forget its first argument when the starting time of \eqref{ttarget1} and \eqref{ttarget2} are the same. Hence, for all $(s',y')$ and $(s,y)$ in $[t,T]\times \R^d$: 
$p(t,\mu;s',y';s,y) : = p([X^{t,\mu}];s',y;s,y)$ and  $p(\mu;t,y';s,y) := p(t,\mu;t,y';s,y)$.

When differentiating the function $u$ in the measure direction, we have to differentiate the integrand in the expression \eqref{eq:repsol} in that direction. Thus, we have to estimate a quantity of the form 
$$\p_\mu \langle \varphi , [X_s^{t,\mu}] \rangle,\quad s\in (t,T].$$

Then, in the following, for any $\alpha$-Hölder function $\phi$, $\alpha \in (0,1]$, we denote by $v$ the mapping:
\begin{equation}\label{def:v}
v: (t<s,\mu,\phi) \in [0,T]^2\times  \mathcal{P}_2(\R^{d}) \mapsto v_s(t,\mu,\phi) = \langle \phi , [X_s^{t,\mu}]\rangle,
\end{equation}
and we prove in Section \ref{Eov} the following Proposition.

\begin{Proposition}\label{prop:estiv}
Suppose that assumptions $\mathbf{(H\mathcal{E}R)}$ hold, let $\phi$ be some $\alpha$-Hölder function from $\R^d$ to $\R$, let $t$ in $[0,T]$ and let us denote by $\mu$ the law of the solution of \eqref{ttarget1} at time $t$. There exist a positive number $\mathcal{T}_{\ref{prop:estiv}}$ and a positive constant $C_{\ref{prop:estiv}}$, depending only on known parameters in $\mathbf{(H\mathcal{E})}$, such that for all $z$ in $\R^d$ and $s$ in $(t,T]$:
\begin{eqnarray}\label{esti:derivdev}
|\p_\mu v_s(t,\mu,\phi)(z)| \leq C (s-t)^{(-1+\alpha)/2},
\end{eqnarray}
for all $T$ less than $\mathcal{T}_{\ref{prop:estiv}}$.
\end{Proposition}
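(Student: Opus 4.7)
The plan is to leverage the probabilistic representation
$$v_s(t,\mu,\phi) = \int_{\R^d}\int_{\R^d}\phi(y)\, p(\mu;t,z;s,y)\, dy\, d\mu(z),$$
and differentiate through the Lions lifting: take $\xi \sim \mu$ and $H \in \mathbb{L}_2(\Omega)$, and compute the derivative of $\E\int\phi(y)p([\xi+\epsilon H];t,\xi+\epsilon H;s,y)\,dy$ at $\epsilon=0$. Two contributions appear: a \emph{direct} term coming from differentiation in the starting point $\xi$, and a \emph{law} term coming from differentiation of the transition density $p$ in its measure parameter.

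For the direct term, one uses the standard parametrix-type Gaussian upper bound $|\p_z p(\mu;t,z;s,y)| \leq C(s-t)^{-1/2}g_c(s-t,y-\theta_{s,t}(z))$, where $g_c$ denotes a standard Gaussian kernel with a suitable covariance scaling and $\theta_{s,t}$ the frozen flow. The cancellation $\int \p_z p\,dy = 0$ allows one to replace $\phi(y)$ by $\phi(y)-\phi(z)$, so that the Hölder property of $\phi$ trades one power of $(s-t)^{1/2}$ for $(s-t)^{\alpha/2}$ via the Gaussian moment estimate $\int|y-\theta_{s,t}(z)|^{\alpha}g_c\,dy \lesssim (s-t)^{\alpha/2}$. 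This yields the target bound $C(s-t)^{(\alpha-1)/2}$ for this piece.

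For the law term, the dependence $\mu \mapsto p(\mu;t,z;s,y)$ enters only through the scalar paths $r\mapsto \langle\varphi_1,[X_r^{t,\mu}]\rangle = v_r(t,\mu,\varphi_1)$ and $r\mapsto \langle\varphi_2,[X_r^{t,\mu}]\rangle = v_r(t,\mu,\varphi_2)$. Thus, expanding $p$ via the parametrix (Section \ref{Parametrix}) and differentiating the freezing coefficients in $\mu$ produces a representation of $\p_\mu v_s(t,\mu,\phi)(z)$ as a space-time integral of Gaussian-type kernels against the quantities $\p_\mu v_r(t,\mu,\varphi_i)(z)$, $r \in (t,s)$. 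Since $\varphi_1$ is $\alpha_1$-Hölder and $\varphi_2$ is Lipschitz, this is a self-referential bound of exactly the form claimed in \eqref{esti:derivdev}, with exponents $\alpha_1$ or $1$ replacing $\alpha$.

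The principal obstacle is closing this circular inequality. The key observation is that the resulting time integrals are of Beta type: $\int_t^s (s-r)^{-1/2}(r-t)^{(\alpha_i-1)/2}\,dr \lesssim (s-t)^{\alpha_i/2}$, which carries a genuine positive power of $s-t$. On a sufficiently short horizon $T \leq \mathcal{T}_{\ref{prop:estiv}}$, this positive power provides a small factor that allows a Gronwall/bootstrap argument to absorb the self-referential contribution into the left-hand side, leaving only the constant multiple of $(s-t)^{(\alpha-1)/2}$ coming from the direct term. This is the structural reason for the small-time restriction in the statement.
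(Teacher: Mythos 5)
Your proposal follows essentially the same route as the paper: write $\p_\mu v$ by splitting the dependence on $\mu$ into the "direct" piece (giving $\p_x p$, handled by the centering $\phi(y)-\phi(z)$ and a Gaussian moment) and the "law" piece (giving $\p_\mu p$, bounded via the parametrix expansion in terms of $\overline{\p_\mu v}^i$), and then close the resulting self-referential inequality by absorbing the small positive power of $s-t$ on a short horizon. The one detail you gloss over is that the circular bound couples the two quantities $\overline{\p_\mu v}^1$ and $\overline{\p_\mu v}^2$, so the paper closes it in two stages (first specializing to $\phi=\varphi_1$, then to $\phi=\varphi_2$) before returning to general $\phi$; also note that under $\mathbf{(H\mathcal{E}R)}$ the map $\varphi_2$ is only $\alpha_2$-H\"older, not Lipschitz, though this does not change the structure of the argument.
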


Moreover, we have from \eqref{eq:repsol} that the derivative of $u$ or $\p_x u$ along the measure involves the derivative of the transition density $p$ or $\p_x p$ along the measure. We then have to obtain suitable control of these quantities. Here, these controls are summarized by the following Proposition whose proof is postponed to Section \ref{Parametrix}.

\begin{Proposition}\label{prop:estidensity}
Suppose that assumptions $\mathbf{(H\mathcal{E}R)}$ hold, let $t$ in $[0,T]$ and let us denote by $\mu$ the law of the solution of \eqref{ttarget1} at time $t$. Then, for all $x$ in $\R^d$, for all $(s,y)$ in $(t,T]\times \R^d$ and all $z$ in $\R^d$: $p(\mu;t,x;s,y) \leq \hat{p}_c(t,x;s,y)$ where $\hp_c$ is the Gaussian like kernel defined by:
\begin{equation}\label{eq:defGausskern}
\hp_c(t,x;s,y) = \frac{c}{(s-t)^{d/2}} \exp\left(-c\frac{|y-x|^2}{(s-t)}\right),
\end{equation}
where $c$ depends on known parameters in $\mathbf{(H\mathcal{E})}$ only. Moreover, there exist two positive constants $C_{\ref{prop:estidensity}}$ and $C_{\ref{prop:estidensity}}'$, depending only on known parameters in $\mathbf{(H\mathcal{E})}$, such that for all $x$ in $\R^d$, for all $(s,y)$ in $(t,T]\times \R^d$ and all $z$ in $\R^d$
\begin{eqnarray}
&&\p_\mu p(\mu;t,x;s,y)(z) \leq C_{\ref{prop:estidensity}} \sum_{i=1}^2(s-t)^{(\alpha_i-1)/2} \overline{\p_\mu v}^i_{(\alpha_i-1)/2}(\mu;t,s)(z) \hat{p}_c(t,x;s,y),\label{prop:estidensity2}\\
&& \p_\mu \p_x p(\mu;t,x;s,y)(z) \leq C_{\ref{prop:estidensity}}' \sum_{i=1}^2 (s-t)^{\alpha_i/2-1} \overline{\p_\mu v}^i_{\alpha_i/2-1}(\mu;t,s)(z) \hat{p}_c(t,x;s,y),\label{prop:estidensity3}
\end{eqnarray}
where we used the abusive notation 
\begin{eqnarray*}
&&\overline{\p_\mu v}_\gamma^i(\mu;t,s)(z) =  \sup_{r\in[t,s]}\big\{(r-t)^{-\gamma}|\p_\mu v_r(t,\mu,\varphi_i)(z) |\big\}.
\end{eqnarray*}
\end{Proposition}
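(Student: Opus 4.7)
The approach is a parametrix expansion adapted to the McKean--Vlasov structure. Fix $(s,y)\in (t,T]\times\R^d$ and let $\tilde{\mathcal{L}}^{s,y;\mu}$ denote the generator obtained from $\mathcal{L}^{t,\mu}$ by freezing the spatial argument in the coefficients at $y$; its transition density $\tilde{p}^{s,y}(\mu;t,x;s,y)$ is an explicit Gaussian kernel whose covariance is $\int_t^s a(r,y,\langle\varphi_2,[X_r^{t,\mu}]\rangle)\,dr$. Under the uniform ellipticity \textbf{(HE3)} this covariance is of order $(s-t)$, hence $\tilde{p}^{s,y}\le \hat{p}_c$. Duhamel's formula
\begin{equation*}
 p(\mu;t,x;s,y) = \tilde{p}^{s,y}(\mu;t,x;s,y) + \int_t^s\!\!\int_{\R^d} p(\mu;t,x;r,\xi)\,H^{s,y}(\mu;r,\xi;s,y)\,d\xi\,dr,
\end{equation*}
with $H^{s,y}:=(\mathcal{L}^{t,\mu}-\tilde{\mathcal{L}}^{s,y;\mu})\tilde{p}^{s,y}$, combined with the spatial Hölder regularity of $a$ (exponent $\gamma_a$) and the Gaussian bound $|\p_x^2\tilde{p}^{s,y}|\le C(s-r)^{-1}\hat{p}_c$, gives $|H^{s,y}(\mu;r,\xi;s,y)|\le C(s-r)^{-1+\gamma_a/2}\hat{p}_c(r,\xi;s,y)$. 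Iterating produces the standard parametrix series $p=\sum_{k\ge 0}\tilde{p}^{s,y}\otimes(H^{s,y})^{\otimes k}$, and the beta-function bookkeeping of Gaussian convolutions yields the first claim $p\le \hat{p}_c$.

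For $\p_\mu p$ I would differentiate this series term by term. The crucial observation is that $\tilde{p}^{s,y}$ and $H^{s,y}$ depend on $\mu$ only through the scalar maps $r\mapsto v_r(t,\mu,\varphi_i)=\langle\varphi_i,[X_r^{t,\mu}]\rangle$, $i=1,2$. By the chain rule and the boundedness of $\p_3 b$ and $\p_3\sigma$ from $\mathbf{(HE1)}$ and $\mathbf{(H\mathcal{E}2)}$,
\begin{equation*}
 \p_\mu \tilde{p}^{s,y}(\mu;t,x;s,y)(z) = \sum_{i=1,2}\int_t^s \mathcal{K}_i(r,s,x,y)\,\p_\mu v_r(t,\mu,\varphi_i)(z)\,dr,
\end{equation*}
where $\mathcal{K}_2$ comes from differentiating the Gaussian through its covariance parameter and is of order $(s-t)^{-1}\hat{p}_c$, while $\mathcal{K}_1$ comes through the drift-induced shift of the mean and is dominated by $(s-t)^{-1/2}\hat{p}_c$. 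Writing $|\p_\mu v_r(t,\mu,\varphi_i)(z)|\le(r-t)^{(\alpha_i-1)/2}\overline{\p_\mu v}^i_{(\alpha_i-1)/2}(\mu;t,s)(z)$ and computing $\int_t^s(r-t)^{(\alpha_i-1)/2}dr=C(s-t)^{(\alpha_i+1)/2}$, the $i$-th term contributes exactly $C(s-t)^{(\alpha_i-1)/2}\overline{\p_\mu v}^i_{(\alpha_i-1)/2}(\mu;t,s)(z)\hat{p}_c$, matching the right-hand side of \eqref{prop:estidensity2}. The $\mu$-derivatives of the higher-order iterates $\tilde{p}^{s,y}\otimes (H^{s,y})^{\otimes k}$ produce terms of the same form multiplied by an additional $(s-t)^{\gamma_a/2}$ factor per convolution, which sum to a finite quantity through an iterated beta-function argument.

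For \eqref{prop:estidensity3} I would apply $\p_x$ to the differentiated series above; each application of $\p_x$ to a Gaussian kernel contributes an additional $(s-t)^{-1/2}$, so that $(s-t)^{(\alpha_i-1)/2}$ is upgraded to $(s-t)^{\alpha_i/2-1}$. The main technical obstacle I expect is verifying that the enlarged time-singularity $(r-t)^{(\alpha_i-1)/2}$ coming from $\p_\mu v_r$ remains compatible with the $(s-r)^{-1+\gamma_a/2}$ singularity of $H^{s,y}$ inside each time convolution and that the resulting constants still sum to a finite value uniform in the mollification parameter. A secondary delicate point is justifying the interchange of $\p_\mu$ (resp.\ $\p_\mu\p_x$) with the infinite series and with the space-time integrals, which will rely on the smoothness afforded by $\mathbf{(H\mathcal{E}R)}$.
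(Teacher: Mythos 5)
Your proposal follows essentially the same route as the paper: a McKean--Singer parametrix expansion with coefficients frozen at the terminal point, Gaussian bounds on the frozen kernel and on the iterated smoothing kernel $H^{\otimes k}$, and term-by-term differentiation in $\mu$ using the crucial observation that the frozen kernel and $H$ depend on $\mu$ only through the scalar functionals $r\mapsto v_r(t,\mu,\varphi_i)$, so that the chain rule funnels all $\mu$-dependence into $\p_\mu v_r$, bounded via the weighted sup $\overline{\p_\mu v}^i_{(\alpha_i-1)/2}$. The technical obstacles you flag (interchange of $\p_\mu$ with the series, and the compatibility of the $(r-t)^{(\alpha_i-1)/2}$ singularity with the $(s-r)^{\gamma_a/2-1}$ singularity in the time convolutions) are precisely what the paper handles by pulling the $\overline{\p_\mu v}$ factor out as a sup over $[t,s']$ before evaluating the remaining beta-type time integrals; this is the content of the paper's Lemma on $\p_\mu H^{\otimes k}$ and the Gaussian estimates on $\p_\mu\tilde{p}$, $\p_\mu\p_x\tilde{p}$.
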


We have now all the ingredients to complete the proof. Thanks to estimates \eqref{esti:derivdev} on $\p_\mu v$ and \eqref{prop:estidensity2} on $\p_\mu p$ we deduce that we can invert the differentiation and integration operators when differentiating the right hand side of \eqref{eq:repsol} w.r.t. the measure. Hence, the derivative of $u$ in the measure direction writes, at any point $z$ of $\R^d$:
\begin{eqnarray}
\p_\mu u(t,x,\mu)(z) &=& \int_t^T \int_{\R^d} \p_3 \tilde{b}(s,y,\langle \varphi_1, [X_s^{t,\mu}]\rangle) \p_\mu \langle \varphi_1, [X_s^{t,\mu}]\rangle(z)  p(\mu;t,x;s,y) \d y \d s\notag\\
&+& \int_t^T \int_{\R^d} \tilde{b}(s,y,\langle \varphi_1, [X_s^{t,\mu}]\rangle) \p_\mu p(\mu;t,x;s,y)(z) \d y \d s,
\end{eqnarray}
and satisfies, thanks to estimates \eqref{prop:estidensity2} and \eqref{esti:derivdev}

\begin{eqnarray*}
&&\left|\p_\mu u(t,x,\mu)(z)\right| \\
&&\leq  C \int_t^T \int_{\R^d} ||\p_3\tilde{b}||_{\infty} (s-t)^{(-1+\alpha)/2}  p(\mu;t,x;s,y) \d y \d s\notag\\
&& \quad +  \int_t^T \int_{\R^d}  ||\tilde{b}||_{\infty} C_{\ref{prop:estidensity}} \sum_{i=1}^2(s-t)^{(-1+\alpha_i)/2} \overline{\p_\mu v}^i_{(\alpha_i-1)/2}(\mu;t,s)(z) \hat{p}_c(t,x;s,y) \d y \d s.
\end{eqnarray*}

Therefore we can deduce from \eqref{esti:derivdev} the there exist a positive constant $C'$ and a positive number $\delta$,  depending only on known parameters in $\mathbf{(H\mathcal{E})}$, such that:

\begin{eqnarray*}
\left|\p_\mu u(t,x,\mu)(z)\right| &\leq & C' T^{\delta}.
\end{eqnarray*}

Now, we have: 
\begin{equation}
\p_x u(t,x,\mu) = \int_t^T \int_{\R^d} \tilde{b} (s,y,\langle \varphi_1, [X_s^{t,\mu}]\rangle) \p_x p(\mu;t,x;s,y) \d y \d s.
\end{equation}

Hence, we can differentiate the mapping $\p_xu$ along the measure and by using the same arguments as above, with estimate \eqref{prop:estidensity3} on $\p_\mu \p_x p$ instead of \eqref{prop:estidensity2}, we obtain that 
\begin{eqnarray*}
\left|\p_\mu (\p_x u(t,x,\mu))(z)\right| &\leq & C'' T^{\delta'},
\end{eqnarray*}
for some positive constant $C''$ and positive number $\delta'$,  depending only on known parameters in $\mathbf{(H\mathcal{E})}$.\\

Finally, the estimates on $\p_x u $ and $\p^2_{x}u$ can be obtained by classical arguments, when viewing the argument $\mu$ as a parameter. See \emph{e.g.} \cite{friedman_partial_1964}. This concludes the proof of Theorem \ref{MRPDE}.

\section{Differentiation and estimation of $v$: proof of Proposition \ref{prop:estiv}}\label{Eov}
With the notations defined in the previous section, we have that  for all $s$ in $(t,T]$,
\begin{eqnarray*}
v_s(t,\mu,\phi) &=& \int_{\R^d} \phi(y) \int_{\R^d} p(\mu; t,x;s,y) \d \mu(x) \d y\\
&:=& \int_{\R^d} \phi(y)P(t,\mu,\mu;s,y) \d y,
\end{eqnarray*}
where the function $P$ is the function
$$P : (t<s,\lambda,\nu,y) \in [0,T]^2\times \mathcal{P}_2(\R^d) \times \mathcal{P}_2(\R^d) \times \R^d  \mapsto P (t,(\lambda,\nu);s,y) =   \int_{\R^d} p(\lambda;t,x;s,y) \d \nu(x).$$

With this notation and by using the fact that $p$ is a density we have:

\begin{eqnarray*}
\p_\mu v_s(t,\mu,\phi) &=& \p_\mu  \int_{\R^d}  \phi(y) P(t,\mu,\mu; s,x) \d y\\
&=&  \left[\p_\lambda\int_{\R^d} \phi(y)  P(t,\lambda,\mu;s,y)  \d y \right]_{\lambda = \mu}+ \left[\p_\nu\int_{\R^d}  \phi(y)  P(t,\mu,\nu;s,y) \d y\right]_{\nu=\mu}\\
&=& \left[\p_\lambda\int_{\R^d} \phi(y)  P(t,\lambda,\mu;s,y)  \d y\right]_{\lambda = \mu} +\int_{\R^d}  (\phi(y)-\phi(\xi))   [\p_\nu  P(t,\mu,\nu;s,y)]_{\nu = \mu} \d y,
\end{eqnarray*}
whatever $\xi$ in  $\R^d$. Since by Fubini's Theorem we have

\begin{eqnarray*}
\p_\lambda\int_{\R^d} \phi(y)  P(t,\lambda,\mu;s,y)  \d y &=&   \p_{\lambda} \int_{\R^d} \int_{\R^d} (\phi(y) -\phi(x)) p(\lambda;t,x;s,y)\d \mu(x) \d y\\
&& +\p_{\lambda} \int_{\R^d}  \int_{\R^d}  \phi(x)p(\lambda;t,x;s,y)\d \mu(x) \d y\\
&=&   \p_{\lambda} \int_{\R^d} \int_{\R^d} (\phi(y) -\phi(x)) p(\lambda;t,x;s,y)\d \mu(x) \d y\\
&& +\p_{\lambda} \int_{\R^d}  \phi(x) \int_{\R^d}  p(\lambda;t,x;s,y) \d y \d \mu(x)\\
&=&   \p_{\lambda} \int_{\R^d} \int_{\R^d} (\phi(y) -\phi(x)) p(\lambda;t,x;s,y)\d \mu(x) \d y,
\end{eqnarray*}
and since by definition

\begin{eqnarray*}
\p_\nu  P(t,\mu,\nu;s,y)(\cdot) &=& \p_x p(\mu;t,\cdot;s,y),
\end{eqnarray*}
we deduce that for all $\xi$ in $\R^d$, the derivative $\p_\mu v$ taking at any point $z$ in $\R^d$ writes

\begin{eqnarray}\label{proofMR:ecriturev}
\p_\mu v_s(t,\mu,\phi)(z) &=&\int_{\R^d} \int_{\R^d} (\phi(y) -\phi(x)) \p_\mu p(\mu;t,x;s,y)(z) \d \mu(x) \d y \notag\\
&& +\int_{\R^d}  (\phi(y)-\phi(\xi)) \p_x p(\mu;t,z;s,y) \d y.
\end{eqnarray}

So that for any given $z$ in $\R^d$, by choosing $\xi=z$ we get 
\begin{eqnarray*}
\p_\mu v_s(t,\mu,\phi)(z) &=&\int_{\R^d} \int_{\R^d} (\phi(y) -\phi(x)) \p_\mu p(\mu;t,x;s,y)(z) \d \mu(x) \d y \\
&& +\int_{\R^d}  (\phi(y)-\phi(z)) \p_x p(\mu;t,z;s,y) \d y.
\end{eqnarray*}

Thanks to the estimates on the transition density $p$ and its derivatives in the measure direction from Proposition \ref{prop:estidensity}, Fubini's Theorem, regularity assumed on $\phi$ and by using the Gaussian decay of $\hat{p}_c$ \footnote{\emph{i.e.} the inequality: $\forall \eta>0,\ \forall q>0,\ \exists \bar{C}>0 \text{ s.t. } \forall \sigma >0,\ \sigma^{q} e^{-\eta\sigma}\leq \bar{C} $.}  we obtain the following bound:
\begin{eqnarray}
\left| \p_\mu v_s(t,\mu,\phi)(z) \right| &\leq & C\Bigg\{\sum_{i=1}^2 (s-t)^{(\alpha_i-1)/2} \overline{\p_\mu v}^i_{{(\alpha_i-1)/2}}(\mu;t,s)(z) \int_{\R^d} \int_{\R^d} |y -x|^{\alpha} \hp_c(t,x;s,y) \d \mu(x) \d y\notag \\
&& +\int_{\R^d}  |y -z|^{\alpha} (s-t)^{-1/2} \hp_c(t,z;s,y) \d y\notag\Bigg\}\\
&\leq & C'\Bigg\{\sum_{i=1}^2 (s-t)^{(-1+\alpha_i+\alpha)/2} \overline{\p_\mu v}^i_{{(\alpha_i-1)/2}}(\mu;t,s)(z) +(s-t)^{(-1+\alpha)/2}\Bigg\}\label{proofMR:esti1},
\end{eqnarray}
which holds true for any $\alpha$-Hölder function $\phi$. Then, by choosing $\phi=\varphi_1$ (and so $\alpha=\alpha_1$), by multiplying both sides by $(s-t)^{(1-\alpha_1)/2}$, we deduce from a circular argument that there exists a positive time $\mathcal{T}'$ depending only on known parameters in $\mathbf{(H\mathcal{E})}$, such that for all $T$ less than $\mathcal{T}'$:
\begin{eqnarray}\label{proofMR:esti2}
(s-t)^{(1-\alpha_1)/2}\left| \p_\mu v_s(t,\mu,\varphi_1)(z) \right| &\leq & C''\bigg\{  (s-t)^{\alpha_2/2} \overline{\p_\mu v}^2_{{(-1+\alpha_2)/2}}(\mu;t,s)(z) + 1\bigg\}.
\end{eqnarray}
By plugging this estimate in \eqref{proofMR:esti1} and by iterating this argument (choosing $\phi=\varphi_2$ so that $\alpha=\alpha_2$, then multiplying both sides by $t^{(1-\alpha_2/2)}$ and using a circular argument)  we obtain that there exists a positive time $\mathcal{T}''$, depending only on known parameters in $\mathbf{(H\mathcal{E})}$, such that for all $T$ less than $\mathcal{T}''$:
\begin{eqnarray}\label{proofMR:esti3}
(s-t)^{(1-\alpha_2)/2}\left| \p_\mu v_s(t,\mu,\varphi_2)(z) \right| &\leq & C''.
\end{eqnarray}
Again, by plugging this estimate in \eqref{proofMR:esti2} and then using the resulting estimate together with \eqref{proofMR:esti3} in \eqref{proofMR:esti1}, we finally deduce that there exists a positive time $\mathcal{T}$, depending only on known parameters in $\mathbf{(H\mathcal{E})}$, such that for all $T$ less than $\mathcal{T}$:
\begin{eqnarray*}
(s-t)^{(1-\alpha)/2}\left| \p_\mu v_s(t,\mu,\phi)(z) \right| &\leq & C''',
\end{eqnarray*}
which concludes the proof.

\section{Estimation of the transition density $p$}\label{Parametrix}
This section is dedicated to the proof of Proposition \ref{prop:estidensity} and so, to the study of the transition density $p$ of the flow \eqref{ttarget2}. Under $\mathbf{(H\mathcal{E}R)}$ it is clear that for all initial data $(t,\mu)$ in $[0,T]\times \mathcal{P}_2(\R^d)$, equation \eqref{ttarget1} admits a unique solution $X^{t,\mu}$. Thus, we can suppose that the family of probability measures $[X^{t,\mu}]$ acts as a time dependent parameter in \eqref{ttarget2}, so that the unique solution of \eqref{ttarget2} has a classical transition density $p$ parametrized by the family of probability measures $[X^{t,\mu}]$. Once the law dependence is fixed, we can now use a classical parametrix expansion of McKean-Singer type \cite{mckean_jr._curvature_1967} for linear processes in order to represent the transition density $p$.

The parametrix expansion of Mckean and Singer is based on the following observation: in small time, the transition density of a (smooth enough) process should be closed enough to the transition density of the associated frozen process (\emph{i.e.} whose coefficients are constants and fixed at the (final) value of the process).  Hence, the transition density of interest can be expanded in terms of the frozen transition density. Since the frozen transition density usually enjoys well known properties, \emph{e.g.} it has an explicit form or can be estimated by explicit (and nice) functions, this expansion allows to estimate the original transition density. As a consequence, this method requires a good knowledge of the frozen transition density and on the associated frozen process.

Thus, this section is organized as follows: we first introduce in subsection \ref{subsec:frozproc} the frozen process and its associated transition density and give its explicit expression. Then, we give the estimates on the frozen transition density and its derivative. In subsection \ref{subsec:paramexp} we show how the transition density of \eqref{ttarget2} can be expanded in terms of the transition density of the frozen process. Hence, we obtain an explicit expression of the transition density $p$ of \eqref{ttarget2} which can be estimated. These estimations are done in subsection \ref{subsec:estip} and lead to proof of Proposition \ref{prop:estidensity}. Finally, we suppose throughout this section that $T<1$.

\subsection{The frozen system}\label{subsec:frozproc}
Let $(t,\mu)$ in $[0,T]\times \mathcal{P}_2(\R^d)$, for any point $\xi$ in $\R^d$, we define the frozen flow as the solution of: 
\begin{equation}\label{froz}
\tX_s^{s',y',\xi,[X^{t,\mu}]} = y' + \int_{s'}^s b(r,\xi,\langle\varphi_1, [X_r^{t,\mu}]\rangle) \d r + \int_{s'}^s\sigma(r,\xi,\langle\varphi_2, [X_r^{t,\mu}]\rangle) \d B_r,
\end{equation}
for all $(s',y')$ in $[t,T]\times \R^d$. Under $\mathbf{(H\mathcal{E}R)}$, it is clear that this flow exists and is unique, moreover, it has a transition density $\tp$ defined for all $(s,y)$ in $[t,T]\times \R^d$ by:
\begin{equation}\label{eq:expdetildep}
\tp^{\xi}(t,\mu;s',y';s,y) = \frac{1}{(2\pi)^{d/2}} \big[\det[a_{s',s}^{\xi}(t,\mu)]\big]^{-1/2} \exp\l(-\frac{1}{2}\bigg|\big[a_{s',s}^{\xi}(t,\mu)\big]^{-1/2}\big(y-y'-m_{s',s}^{\xi}(t,\mu)\big)^*\bigg|^2 \r),
\end{equation}
where we adopted the same convention of notations for $\tp$ as for $p$ and where
\begin{eqnarray}
m_{s',s}^{\xi}(t,\mu) = \int_{s'}^s b(r,\xi,\langle \varphi_1, [X_r^{t,\mu}] \rangle) \d r,\\
a_{s',s}^{\xi}(t,\mu) = \int_{s'}^s a(r,\xi,\langle \varphi_2, [X_r^{t,\mu}] \rangle) \d r.
\end{eqnarray}
The frozen transition density \eqref{eq:expdetildep} admits Gaussian type bounds, namely, we prove in Appendix \ref{sec:prooflemmas} the following result. 
\begin{Proposition}\label{prop:estifroedensity1} Suppose that hypothethis $\mathbf{(H\mathcal{E}R)}$ holds. Let $t$ in $[0,T]$ and let $\mu$ denotes the law of the process \eqref{ttarget1} at time $t$. Then:

\begin{enumerate}[$\bullet$]
\item there exists a positive constant $\bar{C}_{\ref{prop:estifroedensity1}}$, depending only on known parameters in $\mathbf{(H\mathcal{E})}$, such that
\begin{equation}\label{gaussboundptilde} 
\forall s<s'\in [0,T]^2,\ \forall y,y'\in \R^d,\ \forall \xi \in \R^{d},\quad  \tp^\xi(t,\mu;s',y';s,y) \leq \bar{C}_{\ref{prop:estifroedensity1}}\hp_{c}(s',y';s,y),
\end{equation}
where $\hp_c$ is the Gaussian like kernel defined by \eqref{eq:defGausskern};
\item there exist two positive constants $C_{\ref{prop:estifroedensity1}}$ and $C_{\ref{prop:estifroedensity1}}'$, depending on known parameters in $\mathbf{(H\mathcal{E})}$ only, such that for all $s$ in $(t,T]$, for all $\xi$ in $\R^d$ and all $x,y$ in $\R^d$:
\begin{eqnarray}
&&\big|\p_\mu \tp^{\xi}(\mu;t,x;s,y)(z)\big| \leq C_{\ref{prop:estifroedensity1}}\sum_{i=1}^2  (s-t)^{(\alpha_i-1)/2} \overline{\p_\mu v}_{(\alpha_i-1)/2}^i(\mu;t,s')(z)\hat{p}_c(t,x;s,y) ,\label{estipderivmu}\\
&&\big|\p_\mu \p_x \tp^{\xi}(\mu;t,x;s,y)(z)\big| \leq C_{\ref{prop:estifroedensity1}}' \sum_{i=1}^2  (s-t)^{\alpha_i/2-1} \overline{\p_\mu v}_{(\alpha_i-1)/2}^i(\mu;t,s)(z)\hat{p}_c(t,x;s,y), \label{estipderivxmu}
\end{eqnarray}
for all $z$ in $\R^d$;
\item for all $s'<s$ in $(t,T]^2$, for all $y',y$ in $\R^d$:
\begin{eqnarray*}
&&\big|\p_\mu \tp^{\xi}(t,\mu;s',y';s,y)(z)\big| \leq C \sum_{i=1}^2  (s'-t)^{(\alpha_i-1)/2}\overline{\p_\mu v}_{(\alpha_i-1)/2}^i(\mu;t,s) (z) \hat{p}_c(s',y';s,y),\\
&&\big|\p_\mu \p_{y'}\tp^{\xi}(t,\mu;s',y';s,y)(z)\big| \leq C'(s-s')^{-1/2} \sum_{i=1}^2  (s'-t)^{(\alpha_i-1)/2} \overline{\p_\mu v}_{(\alpha_i-1)/2}^i(\mu;t,s')(z)  \hat{p}_c(s',y';s,y),\\
&&\big|\p_\mu \p^2_{y'}\tp^{\xi}(t,\mu;s',y';s,y)(z)\big| \leq C''(s-s')^{-1} \sum_{i=1}^2  (s'-t)^{(\alpha_i-1)/2} \overline{\p_\mu v}_{(\alpha_i-1)/2}^i(\mu;t,s')(z)  \hat{p}_c(s',y';s,y),
\end{eqnarray*}
for all $z$ in $\R^d$.
\end{enumerate}
\end{Proposition}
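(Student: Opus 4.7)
The plan is to exploit the explicit Gaussian form \eqref{eq:expdetildep} of $\tp^\xi$ together with the observation that the $\mu$-dependence of $\tp^\xi$ enters only through the mean $m_{s',s}^\xi(t,\mu)$ and covariance $a_{s',s}^\xi(t,\mu)$, which in turn depend on $\mu$ only through the scalar quantities $v_r(t,\mu,\varphi_i)=\la\varphi_i,[X_r^{t,\mu}]\ra$ for $i=1,2$. The Gaussian bound \eqref{gaussboundptilde} of the first bullet is immediate from ellipticity: assumption $\mathbf{(HE3)}$ forces $\Lambda^{-1}(s-s')I_d\leq a_{s',s}^\xi(t,\mu)\leq\Lambda(s-s')I_d$, whence $[\det a_{s',s}^\xi(t,\mu)]^{-1/2}\leq C(s-s')^{-d/2}$ and the quadratic form in the exponent is bounded below by $\Lambda^{-1}|y-y'-m_{s',s}^\xi|^2/(s-s')$. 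Boundedness of $b$ yields $|m_{s',s}^\xi|\leq C_b(s-s')$, and absorbing this contribution via the triangle inequality and a slight shrinkage of the exponential constant gives \eqref{gaussboundptilde}.

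For the estimates involving $\p_\mu$, I would apply the chain rule
\begin{equation*}
\p_\mu \tp^{\xi}(t,\mu;s',y';s,y)(z) = \sum_{j}\p_{m_j}\tp^{\xi}\,\p_\mu [m_{s',s}^\xi]_j(t,\mu)(z) + \sum_{j,k}\p_{a_{jk}}\tp^{\xi}\,\p_\mu [a_{s',s}^\xi]_{jk}(t,\mu)(z),
\end{equation*}
and compute the parameter-wise derivatives of $\tp^\xi$ directly from \eqref{eq:expdetildep}: $\p_m\tp^\xi = a^{-1}(y-y'-m)\tp^\xi$, while $\p_{a_{jk}}\tp^\xi$ combines $-\tfrac12(a^{-1})_{jk}\tp^\xi$ with $\tfrac12[a^{-1}(y-y'-m)]_j[a^{-1}(y-y'-m)]_k\tp^\xi$. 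The ellipticity estimates on $a_{s',s}^\xi$ together with the Gaussian absorption inequality $|u|^k\exp(-c|u|^2/\tau)\leq C\tau^{k/2}\exp(-c'|u|^2/\tau)$ yield $|\p_m\tp^\xi|\leq C(s-s')^{-1/2}\hp_c$ and $|\p_a\tp^\xi|\leq C(s-s')^{-1}\hp_c$. Differentiation under the integral (legal under $\mathbf{(H\mathcal{E}R)}$) then gives
\begin{equation*}
\p_\mu m_{s',s}^\xi(t,\mu)(z) = \int_{s'}^s \p_3 b(r,\xi,v_r(t,\mu,\varphi_1))\,\p_\mu v_r(t,\mu,\varphi_1)(z)\,dr,
\end{equation*}
and an analogous formula for $a$ with $\varphi_2$ via $\p_3 a = (\p_3\sigma)\sigma^* + \sigma(\p_3\sigma)^*$. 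Using $\|\p_3 b\|_\infty,\|\p_3\sigma\|_\infty\leq C$ and the pointwise bound $|\p_\mu v_r(t,\mu,\varphi_i)(z)|\leq (r-t)^{(\alpha_i-1)/2}\overline{\p_\mu v}^i_{(\alpha_i-1)/2}(\mu;t,s)(z)$ (which is precisely the definition of $\overline{\p_\mu v}^i$), together with monotonicity of $(r-t)^{(\alpha_i-1)/2}$ on $[s',s]$ (since the exponent is nonpositive), one obtains $|\p_\mu m_{s',s}^\xi(z)|\leq C(s-s')(s'-t)^{(\alpha_1-1)/2}\overline{\p_\mu v}^1(z)$ and a similar bound for $\p_\mu a$ with $\alpha_2$. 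Plugging into the chain rule decomposition and using $T\leq 1$ to collapse the leftover positive powers of $(s-s')$ delivers the third-bullet estimate on $\p_\mu \tp^\xi$; the case $s'=t$, $y'=x$ of the second bullet is identical except that $\int_t^s(r-t)^{(\alpha_i-1)/2}\,dr = c_{\alpha_i}(s-t)^{(\alpha_i+1)/2}$, which combined with the $(s-t)^{-1/2}$ or $(s-t)^{-1}$ prefactor from $\p_m\tp^\xi$, $\p_a\tp^\xi$ and $T\leq 1$ yields \eqref{estipderivmu}.

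For the mixed estimates (\eqref{estipderivxmu} in the second bullet and the $\p_\mu \p_{y'}\tp^\xi$, $\p_\mu \p^2_{y'}\tp^\xi$ bounds in the third bullet), I commute the derivatives: $\p_x\tp^\xi = -\p_y\tp^\xi = a^{-1}(y-x-m)\tp^\xi$ gains an additional factor $(s-s')^{-1/2}\hp_c$ after Gaussian absorption, and similarly $\p_{y'}$ and $\p^2_{y'}$ gain $(s-s')^{-1/2}$ and $(s-s')^{-1}$ respectively; applying $\p_\mu$ to the resulting expression produces terms of exactly the same structure treated above, handled by the same chain rule argument. The main technical obstacle is the careful bookkeeping of the powers of $(s-s')$ and $(s'-t)$ produced by $\p_m\tp^\xi$, $\p_a\tp^\xi$, $\p_\mu m$, $\p_\mu a$ and the spatial derivatives, together with repeated Gaussian absorption with a slightly shrunken exponential constant at each step; beyond this, no new ideas are required beyond the chain rule and the explicit Gaussian density \eqref{eq:expdetildep}.
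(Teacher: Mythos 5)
Your proposal matches the paper's proof essentially step for step: the paper also differentiates the explicit Gaussian form \eqref{eq:expdetildep} through the parameters $m_{s',s}^\xi$ and $a_{s',s}^\xi$, proves exactly the bounds on $\p_\mu m_{s',s}^\xi$ and $\p_\mu a_{s',s}^\xi$ that you derive (stated there as Claim \ref{EstiDerivMoyVar}, with the same integral estimates $\int_t^s (r-t)^{(\alpha_i-1)/2}\,\mathrm{d}r$ when $s'=t$ and the monotonicity bound $(s-s')(s'-t)^{(\alpha_i-1)/2}$ when $s'>t$), and then combines these with ellipticity, Gaussian absorption, and $T\leq 1$. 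Your presentation is a correct and faithful reconstruction of the argument.
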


Finally, we have that the generator of the frozen flow \eqref{froz} is given by
\begin{equation}
\tilde{\mL}^{\xi,t,\mu}_{s',y'} : = b(s',\xi,\langle \varphi_1, [X_{s'}^{t,\mu}] \rangle) \p_{y'} + \frac{1}{2}{\rm Tr}\left[a(s',\xi,\langle \varphi_2, [X_{s'}^{t,\mu}] \rangle) \p_{y'}^2\right].
\end{equation}
Above, the subscript $(s',y')$ means that the coefficients of the operator are evaluated at time $s'$ and that the differentiation operator acts on the space variable $y'$.

\subsection{The parametrix expansion}\label{subsec:paramexp}
We now give the parametrix representation of the transition density $p$ of \eqref{ttarget2}. Proof of such a result is classical, we nevertheless wrote it since it allows to understand the crucial estimates, which will be useful in the sequel.

\begin{Proposition}\label{prop:parametrixrep}
There exists a smoothing kernel $H : [0,T]\times \mP_2(\R^d)\times [0,T] \times \R^d \times [0,T] \times \R^d \to \R $ given by 
\begin{equation}\label{defH}
H(t,\mu; s',y';s,y) = \left(\tilde{\mL}^{y,t,\mu}-\mL^{t,\mu} \right)_{s',y'} \tp^y(t,\mu;s',y';s,y),
\end{equation}
where we recall that $\mL^{t,\mu}$ is the generator of \eqref{ttarget2}, such that the transition density $p$ of the flow $X$ defined by \eqref{ttarget2} writes:
\begin{equation}
p(t,\mu;s',y';s,y) = \tp^y(t,\mu;s',y';s,y) + \sum_{k =1}^{+\infty} \int_{s'}^s \int_{\R^d} H^{\otimes k}(t,\mu;r,u;s,y) \tilde{p}^{y'}(t,\mu;s',y';r,u) \d u \d r,
\end{equation}
 where $H^{\otimes k}$ is recursively defined by:
\begin{equation}\label{Hkpun}
H^{\otimes k+1}(t,\mu;s',y';s,y) = \int_{s'}^s \int_{\R^d} H^{\otimes k}(t,\mu;r,u;s,y)H^{}(t,\mu;s',y';r,u) \d u \d r,
\end{equation}
and $H^{\otimes 0} = {\rm Id}.$
\end{Proposition}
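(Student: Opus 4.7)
The plan is to employ the classical McKean--Singer parametrix method. Under $\mathbf{(H\mathcal{E}R)}$, once the family of marginals $[X^{t,\mu}]$ is fixed and treated as a time-dependent parameter, the SDE \eqref{ttarget2} becomes a classical linear non-degenerate SDE with smooth bounded coefficients, so its transition density $p$ exists, is smooth, and satisfies the backward Kolmogorov equation
\begin{equation*}
(\partial_{s'} + \mathcal{L}^{t,\mu}_{s',y'})\, p(t,\mu;s',y';s,y) = 0, \qquad \lim_{s'\to s^-} p(t,\mu;s',y';s,y) = \delta_y(y'),
\end{equation*}
while the explicit Gaussian frozen density $\tp^\xi$ of \eqref{eq:expdetildep} satisfies the analogous equation for the frozen generator $\tilde{\mathcal{L}}^{\xi,t,\mu}$ with the same terminal condition. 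Taking the freezing point equal to the endpoint $y$, a direct computation using the two backward equations yields the basic identity $(\partial_{s'} + \mathcal{L}^{t,\mu}_{s',y'})(p-\tp^y) = H$ with zero terminal data at $s'=s$.

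Applying Duhamel's principle (equivalently, the Feynman--Kac formula) to invert $\partial_{s'}+\mathcal{L}^{t,\mu}$ against the source $H$ produces a first-order Volterra identity expressing $p$ as $\tp^y$ plus a space--time convolution involving $H$. Iterating this identity $N$ times by repeated substitution yields, through a routine induction using the associativity of the space--time convolution, the partial sum $\sum_{k=0}^{N-1}$ of the series displayed in the statement plus a remainder term of order $H^{\otimes N}$. Passing to the limit $N\to\infty$ then gives the claimed series representation of $p$, provided this remainder vanishes.

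Absolute convergence of the series and vanishing of the remainder both rest on the key smoothing estimate
\begin{equation*}
|H(t,\mu;s',y';s,y)| \leq C(s-s')^{-1+\gamma/2}\,\hp_{c}(s',y';s,y)
\end{equation*}
for some $\gamma>0$ depending on $\gamma_a$, $\alpha_1$, $\alpha_2$, which is the content of subsection \ref{subsec:estip}. The gain over the naive $(s-s')^{-1}$ singularity comes from the fact that the second-order piece of $(\tilde{\mathcal{L}}^y - \mathcal{L})_{s',y'}\tp^y$ is multiplied by $a(s',y',\cdot)-a(s',y,\cdot)$, whose H\"older smallness $|y'-y|^{\gamma_a}$ can be absorbed into the Gaussian factor $\tp^y$ at the price of $(s-s')^{\gamma_a/2}$, using the standard estimate $|y'-y|^\alpha \hp_c(s',y';s,y) \lesssim (s-s')^{\alpha/2}\hp_{c'}(s',y';s,y)$. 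The convolution bound $\hp_c \ast \hp_c \lesssim \hp_{c/2}$ combined with the Beta function identity then produces, by induction, $|H^{\otimes k}| \leq C^k(s-s')^{k\gamma/2}/\Gamma(1+k\gamma/2)\,\hp_{c'}$, which is summable in $k$ and drives the remainder to zero. Establishing this smoothing estimate on $H$ with the correct composite exponent $\gamma$ is the main obstacle; the rest of the argument is standard parametrix machinery.
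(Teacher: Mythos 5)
Your proposal follows the same McKean--Singer parametrix route as the paper: write both $p$ and $\tp^y$ against their backward Kolmogorov equations, rearrange to isolate the source $H=(\tilde{\mL}^y-\mL)\tp^y$, apply Duhamel's principle with the fundamental solution $p$ to get the Volterra identity, iterate to the desired order, and control the remainder via the H\"older gain $|y-y'|^{\gamma_a}\hp_c\lesssim(s-s')^{\gamma_a/2}\hp_{c'}$ and the Beta-function recursion (this is exactly Lemma~\ref{paramclassique} in the paper). One small correction: the smoothing exponent in $|H|\lesssim(s-s')^{\gamma_a/2-1}\hp_c$ depends only on $\gamma_a$ and not on $\alpha_1,\alpha_2$, since $\tilde{\mL}^y$ and $\mL$ differ only in the spatial freezing point and are evaluated at the same law argument.
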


\begin{proof}
Let $(s,y)$ belong to $[0,T]\times \R^d$, the transition density $\tilde{p}^y(t,\mu;\cdot,\cdot;s,y)$ satisfies the Fokker-Planck equation:
\begin{equation*}\left\lbrace\begin{array}{ll}
\p_{s'} \tilde{p}^y(t,\mu;s',y';s,y) + \tilde{\mL}^{y,t,\mu}_{s',y'}  \tilde{p}^{y}(t,\mu;s',y';s,y)= 0,\quad (s',y') \in [0,s)\times  \R^d,\\
\tilde{p}^y(t,\mu;s,y';s,y)=\delta_y(y'),\end{array}\right.
\end{equation*}
which can be rewritten as
\begin{equation*}\left\lbrace\begin{array}{ll}
\p_{s'} \tilde{p}^y(t,\mu;s',y';s,y) + \mL^{t,\mu}_{s',y'}  \tilde{p}^y(t,\mu;s',y';s,y)= (\mL^{t,\mu}-\tilde{\mL}^{y,t,\mu})_{s',y'} \tilde{p}^y(t,\mu;s',y';s,y),\quad (s',y') \in [0,s)\times  \R^d\\
\tilde{p}^y(t,\mu;s',y';s,y)=\delta_y(y').\end{array}\right.
\end{equation*}
Note that $p(t,\mu;\cdot,\cdot;s,y)$ is a fundamental solution of this PDE. Therefore $\tilde{p}^y(t,\mu;\cdot,\cdot;s,y)$ writes, for all $(s',y')\in [0,s]\times \R^d$:

\begin{equation*}
\tilde{p}^y(t,\mu;s',y';s,y) = p(t,\mu;s',y';s,y) + \int_{s'}^s \int_{\R^d} (\mL^{t,\mu}-\tilde{\mL}^{y,t,\mu})_{r,u} \tilde{p}^y(t,\mu;r,u;s,y) p(t,\mu;s',y';r,u) \d u \d r.
\end{equation*}

Hence, by iterating $N$ times this procedure, we obtain that
\begin{eqnarray}\label{repexpinter}
p(t,\mu;s'y';s,y) &=&\tilde{p}^y(t,\mu;s',y';s,y) + \sum_{k =1}^N \int_{s'}^s \int_{\R^d} H^{\otimes k}(t,\mu;r,u;s,y) \tilde{p}^{y'}(t,\mu;s',y';r,u) \d u \d r\notag\\
&& \quad  +  \int_{s'}^s \int_{\R^d} H^{\otimes N+1}(t,\mu;r,u;s,y) p(t,\mu;s',y';r,u) \d u \d r.
\end{eqnarray}

In order to obtain the parametrix expansion of $p$, depending only on known quantities (\emph{i.e.} on the smoothing kernel $H$ defined by \eqref{defH} and on the transition density of the frozen process $\tilde{p}$) the idea consists in letting $N$ tend to infinity. To this aim, we need a ``good'' estimate on the approximation error. These controls are the estimate \eqref{gaussboundptilde} in Proposition \ref{prop:estifroedensity1} and the following Lemma.
\begin{lemme}\label{paramclassique} Under assumption $\mathbf{(H\mathcal{E}R)}$ the following assertion holds: there exists a positive constant $C_k^{\ref{paramclassique}}$ given by:
 %\label{lexpansion}
\begin{equation}\label{defck} 
C_k^{\ref{paramclassique}} = C_{\ref{paramclassique}}^k \prod_{r=1}^{k-1} \beta(\gamma_a r/2,\gamma_a/2),
\end{equation}
where $C_{\ref{paramclassique}}$ is a positive constant  depending only on known parameters in $\mathbf{(H\mathcal{E})}$, $\beta$ denotes the 
bêta-function and with the convention $\prod_1^0\equiv 1$, such that for all $s'<s$ in $[t,T]^2$ and $y',y$ in $\R^d$:
\begin{equation*}
|H^{\otimes k}(t,\mu;s',y';s,y)|\leq C_k^{\ref{paramclassique}} (s-s')^{k\gamma_a/2-1} \hat{p}_c(s',y';s,y).
\end{equation*}
\end{lemme}

On the one hand, the first term in the right hand side of \eqref{repexpinter} is controlled by a convolution of two Gaussian functions which is still Gaussian and it is clear from the asymptotic properties of the beta-function (that are recalled in Appendix \ref{APPC}) that the series converges. On the other hand, the $N^{{\rm th}}$ convolution of the kernel $H$ tends uniformly to 0 as $N$ tends to infinity (recall that $T$ is small) and since $p$ is a density, we deduce that the second term in the right hand side of \eqref{repexpinter} tends to 0. Therefore, the density $p$ writes:
\begin{eqnarray}\label{repexfinal}
p(\mu;t,x;s,y) &=&\tilde{p}^y(\mu;t,x;s,y) \\
&&+ \sum_{k \geq 1} \int_t^s \int_{\R^d} H^{\otimes k}(t,\mu;s',y';s,y) \tilde{p}^{y'}(\mu;t,x;s',y') \d y' \d s'.\notag
\end{eqnarray}
\end{proof}

\begin{proof}[Proof of Lemma \ref{paramclassique}.]
By using classical parametrix arguments (see Chapter 1 of \cite{friedman_partial_1964}) and by the  definition \eqref{defH} of $H$, we deduce that there exist two positive constants $C_{_{\ref{paramclassique}}} $ and $c$ depending only on known parameters in $\mathbf{(H\mathcal{E})}$ such that for all  $s' < s\in [t,T]^2$ and $y',y \in \R^d$:
\begin{equation}\label{esrte}
| H^{}(t,\mu;s',y';s,y) |\leq C_{_{\ref{paramclassique}}} (s-s')^{\gamma_a/2-1} \hat{p}_c(s',y';s,y).
\end{equation}

Suppose now as an induction hypothesis that for all $s' < s\in [t,T]^2$ and $y' \in \R^d$:
\begin{eqnarray}\label{FIH}
|H^{\otimes k}(t,\mu;s',y';s,y)| \leq C_k^{\ref{paramclassique}} (s-s')^{k\gamma_a/2-1} \hat{p}_c(s',y';s,y),
\end{eqnarray}
where $C_{k}^{\ref{paramclassique}}$ is defined by \eqref{defck}. Recall that for all integer $k$, $H^{\otimes k+1}$ is recursively defined by
\begin{equation}\label{rapHkpu}
H^{\otimes k+1}(t,\mu;s',y';s,y) = \int_{s'}^s \int_{\R^d} H^{\otimes k}(t,\mu;r,u;s,y)H^{}(t,\mu;s',y';r,u) \d u \d r.
\end{equation}

Hence, by plugging \eqref{esrte} and \eqref{FIH} in \eqref{rapHkpu} and using the Gaussian convolution we obtain that:
\begin{eqnarray*}
| H^{\otimes k+1}(t,\mu;s',y';s,y) |\leq C_k^{\ref{paramclassique}} C_{\ref{paramclassique}} \int_{s'}^s (s-r)^{\gamma_ak/2-1}(r-s')^{\gamma_a/2-1} \d r \hat{p}_c(s',y';s,y),
\end{eqnarray*}
and by the change of variable $r=(s-s')r'+s'$ we have:
\begin{eqnarray}\label{annexelaboundedeH}
| H^{\otimes k+1}(t,\mu;s',y';s,y) |& \leq&  C_{\ref{paramclassique}} C_k^{\ref{paramclassique}} (s-s')^{(k+1)\gamma_a/2-1} \int_0^1 (1-r')^{\bgamma_a k/2-1}(r')^{\gamma_a/2-1} \d r' \hat{p}_c(s',y';s,y)\notag\\
&=& C_{\ref{paramclassique}} C_k^{\ref{paramclassique}} (s-s')^{(k+1)\gamma_a/2-1} \beta(\gamma_a k/2,\gamma_a/2)\hat{p}_c(s',y';s,y)\notag\\
&=& C_{k+1}^{\ref{paramclassique}} (s-s')^{(k+1)\gamma_a/2-1}\hat{p}_c(s',y';s,y). \notag
\end{eqnarray}
\end{proof}

\subsection{Differentiation and estimation of the density $p$ along the measure}\label{subsec:estip}
We are now ready to prove Proposition \ref{prop:estidensity}. We have that 
\begin{equation*}
p(\mu;t,x;s,y) = \tp^y(\mu;t,x;s,y) + \sum_{k =1}^{+\infty} \int_t^s \int_{\R^d} H^{\otimes k}(t,\mu;s',y';s,y) \tilde{p}^{y'}(\mu;t,x;s',y') \d y' \d s',
\end{equation*}
so that the derivative of $p$ w.r.t. $\mu$ writes, at any point $z$ in $\R^d$:
\begin{eqnarray}\label{proof:estiderivmu1}
\p_\mu p(\mu;t,x;s,y)(z) &=&\p_\mu \tp^y(\mu;t,x;s,y)(z)\\
 && + \p_\mu \left(\sum_{k =1}^{+\infty} \int_t^s \int_{\R^d} H^{\otimes k}(t,\mu;s',y';s,y) \tilde{p}^{y'}(t,\mu;t,x;s',y') \d y' \d s'\right)(z).\notag
\end{eqnarray}

Then, in order to invert the integration and differentiation operators in the right hand side of the above equation, we have to show that for all $z$ in $\R^d$,
\begin{eqnarray}\label{proof:estiderivmu2}
(s',y')&\mapsto &\p_\mu H^{\otimes k}(t,\mu;s',y';s,y)(z) \tilde{p}^{y'}(\mu;t,x;s',y')\notag\\
&&+ H^{\otimes k}(t,\mu;s',y';s,y) \p_\mu\tilde{p}^{y'}(\mu;t,x;s',y')(z), 
\end{eqnarray}
is suitably bounded. More precisely, we have to obtain a Gaussian control on  the derivative of the $k$th iteration of the smoothing kernel $\p_\mu H^{\otimes k}$ and on $\p_\mu\tilde{p}$ so that the parametrix expansion still holds, in the same spirit of the proof of Proposition \ref{prop:parametrixrep}. These controls are given by the estimates on the frozen transition density in Proposition \ref{prop:estifroedensity1} and the following Lemma.

\begin{lemme}\label{EstiParametrixHmeasure}
Let $t$ in $[0,T]$ and let $\mu$ be the law of the process \eqref{ttarget1} at time $t$. For all positive integer $k$, there exists a positive constant $\tilde{C}_k^{\ref{EstiParametrixHmeasure}}$, depending only on known parameter in $\mathbf{(H\mathcal{E})}$ and recursively defined by:
\begin{eqnarray*}
\tilde{C}_{k}^{\ref{EstiParametrixHmeasure}}= (C_{k-1}^{\ref{paramclassique}}\tilde{C}+ C\tilde{C}_{k-1}^{\ref{EstiParametrixHmeasure}}) \beta((k-1)\bgamma_a/2,\bgamma_a/2),
\end{eqnarray*}
for $k\geq 2$ and $\tilde{C}_{1}^{\ref{EstiParametrixHmeasure}}=\tilde{C}$ and where $\bgamma_a=\gamma_a \wedge \gamma_a'$ such that
\begin{eqnarray*}
&&\left| \p_\mu H^{\otimes k}(t,\mu;s',y';s,y)(z) \right|\notag\\
&&\leq   \tilde{C}_k^{\ref{EstiParametrixHmeasure}}  (s'-s)^{k\bgamma_a/2-1}  \sum_{i=1}^2 (s'-t)^{(\alpha_i-1)/2} \overline{\p_\mu v}_{(\alpha_i-1)/2}^i(\mu;t,s')(z) \hat{p}_c(s',y';s,y),
\end{eqnarray*}
for all $s'<s\in (t,T]^2$, $y$, $y'$ in $\R^d$ and $z$ in $\R^d$.
\end{lemme}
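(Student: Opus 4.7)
The plan is to prove Lemma \ref{EstiParametrixHmeasure} by induction on $k$, imitating the proof of Lemma \ref{paramclassique} but tracking one extra derivative in the measure direction. The whole argument rests on the ability to differentiate $H$ and $H^{\otimes k}$ term-by-term through the integrals in \eqref{Hkpun}, which is justified by the uniform-in-parameter Gaussian tail bounds provided by Proposition \ref{prop:estifroedensity1}. In the base case I would isolate the various contributions to $\p_\mu H$ and identify the sharp exponent $\bgamma_a = \gamma_a \wedge \gamma_a'$ that governs the time singularity; the inductive step then proceeds by Leibniz rule on \eqref{Hkpun} and reduces, exactly as in Lemma \ref{paramclassique}, to a beta-function time integral.

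For the base case $k=1$, I would write $H = (\tilde{\mL}^{y,t,\mu}-\mL^{t,\mu})_{s',y'} \tp^y$ as the sum of a drift piece $[b(s',y,v_{s'}(t,\mu,\varphi_1)) - b(s',y',v_{s'}(t,\mu,\varphi_1))]\p_{y'}\tp^y$ and a diffusion piece $\tfrac{1}{2}{\rm Tr}\{[a(s',y,v_{s'}(t,\mu,\varphi_2)) - a(s',y',v_{s'}(t,\mu,\varphi_2))]\p_{y'}^2\tp^y\}$. Differentiating in $\mu$ by Leibniz gives two families. The first family arises from differentiating the coefficients themselves; by the chain rule each such term is proportional to $\p_3 b\cdot \p_\mu v_{s'}(t,\mu,\varphi_1)(z)$ or $\p_3 a\cdot \p_\mu v_{s'}(t,\mu,\varphi_2)(z)$. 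Because $\p_3 a$ is only $\gamma_a'$-Hölder in $y$ by $\mathbf{(H\mathcal{E})}$, the $y \leftrightarrow y'$ cancellation in the diffusion piece contributes a factor $|y-y'|^{\gamma_a'}$ which, combined with $\p_{y'}^2\tp^y \sim (s-s')^{-1}\hp_c$ and the standard Gaussian absorption $|y-y'|^{\gamma_a'}\hp_c \leq (s-s')^{\gamma_a'/2}\hp_c$, yields the singularity $(s-s')^{\gamma_a'/2-1}\hp_c$; the drift piece is less singular since $b$ is bounded. The second family comes from differentiating $\tp^y$ itself, where I invoke the bounds on $\p_\mu \p_{y'}\tp^y$ and $\p_\mu \p_{y'}^2\tp^y$ from Proposition \ref{prop:estifroedensity1}; the diffusion Hölder cancellation $|y-y'|^{\gamma_a}$ then absorbs against the Gaussian to give $(s-s')^{\gamma_a/2-1}$. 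Taking the worse exponent $\bgamma_a$ and collecting the $\overline{\p_\mu v}^i$ factors gives the claimed bound with constant $\tilde{C}_1^{\ref{EstiParametrixHmeasure}} = \tilde{C}$.

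For the inductive step, assuming the bound for $k$ I differentiate \eqref{Hkpun} under the integral and write
\begin{align*}
\p_\mu H^{\otimes k+1}(t,\mu;s',y';s,y)(z) &= \int_{s'}^s\!\!\int_{\R^d} \p_\mu H^{\otimes k}(t,\mu;r,u;s,y)(z)\,H(t,\mu;s',y';r,u)\,\d u\,\d r \\
&\quad + \int_{s'}^s\!\!\int_{\R^d} H^{\otimes k}(t,\mu;r,u;s,y)\,\p_\mu H(t,\mu;s',y';r,u)(z)\,\d u\,\d r.
\end{align*}
The first integral is controlled using the inductive hypothesis together with \eqref{esrte}, the second by combining Lemma \ref{paramclassique} for $H^{\otimes k}$ with the base case for $\p_\mu H$. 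In both integrals the two Gaussian kernels convolve to a single $\hp_c(s',y';s,y)$, and the time variable produces the beta-type integral $\int_{s'}^s (s-r)^{k\bgamma_a/2-1}(r-s')^{\bgamma_a/2-1}\,\d r = (s-s')^{(k+1)\bgamma_a/2-1}\beta(k\bgamma_a/2,\bgamma_a/2)$ by the same change of variable as in the proof of Lemma \ref{paramclassique}. Since $\overline{\p_\mu v}^i_{(\alpha_i-1)/2}(\mu;t,\cdot)(z)$ is monotone in its last argument, the factor $\overline{\p_\mu v}^i_{(\alpha_i-1)/2}(\mu;t,r)(z)$ appearing inside the integral can be pulled out after majorization. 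Summing the two contributions produces the recursion $\tilde{C}_{k+1}^{\ref{EstiParametrixHmeasure}} = (C_k^{\ref{paramclassique}}\tilde{C} + C\tilde{C}_k^{\ref{EstiParametrixHmeasure}})\beta(k\bgamma_a/2,\bgamma_a/2)$, closing the induction. The main obstacle is the base case: the careful bookkeeping of which of the (up to four) pieces of $\p_\mu H$ dictates the exponent $\bgamma_a$, and in particular the verification that the Hölder regularity $\gamma_a'$ of $\p_3\sigma$ (assumed precisely in $\mathbf{(H\mathcal{E})}$) is what makes the ``derivative-of-the-coefficient'' family integrable — without this assumption the term would merely be bounded by $|\p_\mu v|(s-s')^{-1}\hp_c$ and the parametrix series would diverge.
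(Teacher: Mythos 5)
Your proposal follows the same route as the paper: induction on $k$, with the base case obtained by applying Leibniz to the four pieces of $\p_\mu H$ (two from differentiating the coefficients $b$ and $a$ in their third slot, yielding $\p_3 b\cdot\p_\mu v_1$ and $\p_3 a\cdot\p_\mu v_2$, and two from differentiating $\tp$ via Proposition \ref{prop:estifroedensity1}), and the inductive step obtained by Leibniz on \eqref{Hkpun}, Gaussian convolution, and a beta-function time integral. Your bookkeeping of which exponents arise — the $\gamma_a'$-Hölder regularity of $\p_3\sigma$ governing the first family, the $\gamma_a$-Hölder regularity of $\sigma$ governing the second, hence $\bgamma_a = \gamma_a\wedge\gamma_a'$ — is exactly what the paper does, and the remark that boundedness alone of $\p_3 a$ would give a non-integrable $(s-s')^{-1}$ singularity is the correct motivation for the assumption.
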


From Lemma \ref{EstiParametrixHmeasure} and estimate \eqref{gaussboundptilde} in Proposition \ref{prop:estifroedensity1} we have that for all $k$:
\begin{eqnarray*}
&&\left|\p_\mu H^{\otimes k}(t,\mu;s',y';s,y)(z) \tilde{p}^{y'}(\mu;t,x;s',y') \right|\\
&\leq &\tilde{C}_k^{\ref{EstiParametrixHmeasure}} \bar{C}_{\ref{prop:estifroedensity1}}  (s-s')^{k\bgamma_a/2-1}  \sum_{i=1}^2 (s'-t)^{(\alpha_i-1)/2} \overline{\p_\mu v}_{(\alpha_i-1)/2}^i(\mu;t,s')(z)   \hat{p}_c(t,x;s,'y') \hat{p}_c(s',y';s,y),
\end{eqnarray*}
and from estimate \eqref{estipderivmu} of Proposition \ref{prop:estifroedensity1} and Lemma \ref{paramclassique} we have that for all $k$
\begin{eqnarray*}
&&\left| H^{\otimes k}(t,\mu;s',y';s,y) \p_\mu\tilde{p}^{y'}(\mu;t,x;s',y')(z) \right|\\
&\leq & C_k^{\ref{paramclassique}}  C_{\ref{prop:estifroedensity1}} (s-s')^{k\bgamma_a/2-1}  \sum_{i=1}^2 (s'-t)^{(\alpha_i-1)/2} \overline{\p_\mu v}_{(\alpha_i-1)/2}^i(\mu;t,s')(z)  \hat{p}_c(t,x;s',y') \hat{p}_c(s',y';s,y).
\end{eqnarray*}

We can hence  invert the differentiation and integration operators in the second term in the right hand side of \eqref{proof:estiderivmu1} and using property of Gaussian convolution we get that there exists a positive constant $C$, depending on known parameters in  $\mathbf{(H\mathcal{E})}$ only, such that

\begin{eqnarray*}
&&\p_\mu p(\mu;t,x;s,y)(z)  \\
&&\leq C (s-t)^{(\alpha_i-1)/2}  \bigg(C_{\ref{prop:estifroedensity1}} +\sum_{k\geq 1}  \Big\{(C_k^{\ref{paramclassique}}C_{\ref{prop:estifroedensity1}} +\tilde{C}_k^{\ref{EstiParametrixHmeasure}}\bar{C}_{\ref{prop:estifroedensity1}}) \beta(k\bar{\gamma_a}/2,(\alpha_i+1)/2)(s-t)^{k\bgamma_a/2}   \Big\}\bigg)\\
&& \qquad \times \hat{p}_c(t,x;s,y) \sum_{i=1}^2 \overline{\p_\mu v}_{(\alpha_i-1)/2}^i(\mu;t,s)(z),
\end{eqnarray*}
so that estimate \eqref{prop:estidensity2} of Proposition \ref{prop:estidensity} follows from the estimates on the parametrix constants $\tilde{C}_k^{\ref{EstiParametrixHmeasure}}$ and on the beta-function given in Appendix \ref{APPC}.\\

For the second assertion it is well seen from usual parametrix technique that 
\begin{equation*}
\p_x p(\mu;t,x;s,y) = \p_x\tp^y(\mu;t,x;s,y) + \sum_{k =1}^{+\infty} \int_t^s \int_{\R^d} H^{\otimes k}(t,\mu;s',y';s,y) \p_x\tilde{p}^{y'}(\mu;t,x;s',y') \d y' \d s',
\end{equation*}
so that the derivative of $\p_x p$ w.r.t. $\mu$ writes, at any point $z$ in $\R^d$:
\begin{eqnarray*}
\p_\mu (\p_x p)(\mu;t,x;s,y)(z) &=&\p_\mu (\p_x \tp^y)(\mu;t,x;s,y)(z)\\
 && + \p_\mu \left(\sum_{k =1}^{+\infty} \int_t^s \int_{\R^d} H^{\otimes k}(t,\mu;s',y';s,y) (\p_x\tilde{p}^{y'})(\mu;t,x;s',y') \d y' \d s'\right)(z).\notag
\end{eqnarray*}
We can use the same arguments as above with estimates \eqref{estipderivxmu}  instead of \eqref{estipderivmu} in Proposition \ref{prop:estifroedensity1} and we obtain that 

\begin{eqnarray*}
&&\p_\mu \p_x p(\mu;t,x;s,y)(z)  \\
&&\leq C (s-t)^{\alpha_i/2-1}  \bigg(C_{\ref{prop:estifroedensity1}}' +\sum_{k\geq 1}  \Big\{(C_k^{\ref{paramclassique}}C_{\ref{prop:estifroedensity1}}' +\tilde{C}_k^{\ref{EstiParametrixHmeasure}}\bar{C}_{\ref{prop:estifroedensity1}}) \beta(k\bar{\gamma_a}/2,(\alpha_i+1)/2)(s-t)^{k\bgamma_a/2}   \Big\}\bigg)\\
&& \qquad \times \hat{p}_c(t,x;s,y) \sum_{i=1}^2 \overline{\p_\mu v}_{(\alpha_i-1)/2}^i(\mu;t,s)(z),
\end{eqnarray*}
from which we deduce estimate \eqref{prop:estidensity3} of Proposition \ref{prop:estidensity}.

\appendix
\section{Proofs of Lemmas \ref{EstiParametrixHmeasure} and \ref{prop:estifroedensity1}}\label{sec:prooflemmas}
In order to avoid heavy notations, the proofs are done in the real case ($d=1$). We also recall that $T<1$.

\begin{proof}[Proof of Lemma \ref{EstiParametrixHmeasure}]
Recall that by definition
\begin{eqnarray*}
H(t,\mu;s',y';s,y) &=& \left(b(s',y,v_{s'}(t,\mu,\varphi_1)) - b(s',y',v_{s'}(t,\mu,\varphi_1))\right) \p_x \tp(\mu;s',y';s,y) \\
&&+ \frac{1}{2}\left[(a(s',y,v_{s'}(t,\mu,\varphi_2)) - a(s,y',v_{s'}(t,\mu,\varphi_2)))\p_x^2 \tp(\mu;s',y';s,y)\right],
\end{eqnarray*}
so that, for any $z$ in $\R^d$:

\begin{eqnarray*}
&&\p_\mu H(t,\mu;s',y';s,y)(z) \\
&&= (b(s',y,v_{s'}(t,\mu,\varphi_1)) - b(s',y',v_{s'}(t,\mu,\varphi_1)))\p_\mu\p_x \tp(\mu;s',y';s,y)(z) \\
&& \quad + (\p_3 b(s',y,v_{s'}(t,\mu,\varphi_1)) - \p_3 b(s',y',v_{s'}(t,\mu,\varphi_1)))\p_\mu v_{s'}(t,\mu,\varphi_1)(z) \p_x \tp(\mu;s',y';s,y)\\
&&\quad +  \frac{1}{2}(a(s',y,v_{s'}(t,\mu,\varphi_2)) - a(s',y',v_{s'}(t,\mu,\varphi_2)))\p_\mu \p_x^2 \tp(\mu;s',y';s,y)(z)\\
&&\quad +  \frac{1}{2}(\p_3 a(s',y,v_s(t,\mu,\varphi_2)) - \p_3 a(s',y',v_{s'}(t,\mu,\varphi_2)))\p_\mu v_{s'}(t,\mu,\varphi_2)(z) \p_x^2 \tp(\mu;s',y';s,y).
\end{eqnarray*}

Hence
\begin{eqnarray*}
&&\left|\p_\mu H(t,\mu;s',y';s,y)(z)\right| \\
&& \leq   C\Bigg\{||b||_{\infty}\sum_{i=1}^2 (s'-t)^{(\alpha_i-1)/2}\overline{\p_\mu v}_{(\alpha_i-1)/2}^i(\mu;t,s')(z)  (s-s')^{-1/2} \\
&& + ||\p_3 b||_{\infty}  (s'-t)^{(\alpha_1-1)/2} \overline{\p_\mu v}_{(\alpha_1-1)/2}^1(\mu;t,s')(z) (s-s')^{-1/2} \\
&& + ||a||_{\bgamma_a} |y-y'|^{\bgamma_a} \sum_{i=1}^2 (s'-t)^{(\alpha_i-1)/2}\overline{\p_\mu v}_{(\alpha_i-1)/2}^i(\mu;t,s')(z) (s-s')^{-1} \\
&& + ||\p_3 a||_{\gamma_{a}'} |y-y'|^{\gamma_{a}'}  (s'-t)^{(\alpha_2-1)/2} \overline{\p_\mu v}_{(\alpha_2-1)/2}^2(\mu;t,s')(z) (s-s')^{-1} \Bigg\}\hat{p}_c(s',y';s,y),
\end{eqnarray*}
where $\bgamma_a = \gamma_a\wedge \gamma_a'$. Therefore, by using the Gaussian decay of $\hat{p}_c$:

\begin{eqnarray*}
&&\left|\p_\mu H(t,\mu;s',y';s,y)(z)\right| \\
&& \leq   C\Bigg\{||b||_{\infty}  \sum_{i=1}^2 (s'-t)^{(\alpha_i-1)/2} \sum_{i=1}^2 \overline{\p_\mu v}_{(\alpha_i-1)/2}^i(\mu;t,s')(z)  (s-s')^{-1/2} \\
&& + ||\p_3 b||_{\infty}   (s'-t)^{(\alpha_1-1)/2} \overline{\p_\mu v}_{(\alpha_1-1)/2}^1(\mu;t,s')(z) (s-s')^{-1/2} \\
&& + ||a||_{\bgamma_a} \sum_{i=1}^2(s'-t)^{(\alpha_i-1)/2} \overline{\p_\mu v}_{(\alpha_i-1)/2}^i(\mu;t,s')(z) (s-s')^{-1+\bgamma_a/2} \\
&& + ||\p_3 a||_{\gamma_{a}'}   (s'-t)^{(\alpha_2-1)/2} \overline{\p_\mu v}_{(\alpha_2-1)/2}^2(\mu;t,s')(z) (s-s')^{-1+\gamma_{a}'/2} \Bigg\}\hat{p}_c(s',y';s,y).
\end{eqnarray*}

So, there exists a positive constant $\tilde{C}$ depending n known parameters in \textbf{(H$\mathbf{\mathcal{E}}$R)} such that:
\begin{eqnarray}\label{eq:estidepreuvelemme}
&&\left|\p_\mu H(t,\mu;s',y';s,y)(z)\right|  \\
&& \leq  \tilde{C} \sum_{i=1}^2 (s'-t)^{(\alpha_i-1)/2}\overline{\p_\mu v}_{(\alpha_i-1)/2}^i(\mu;t,s')(z) (s-s')^{\bgamma_a/2-1} \hat{p}_c(s',y';s,y).\notag
\end{eqnarray}

Assume now as an induction hypothesis that for all $(s',y')$ in $(t;s)\times \R^d$:
\begin{eqnarray*}
&&\left|\p_\mu H^{\otimes k}(t,\mu;s',y';s,y)(z)\right| \leq \tilde{C}_k^{\ref{EstiParametrixHmeasure}} \sum_{i=1}^2 (s'-t)^{(\alpha_i-1)/2}\overline{\p_\mu v}_{(\alpha_i-1)/2}^i(\mu;t,s')(z) (s-s')^{k\bgamma_a/2-1} \hat{p}_c(s',y';s,y),
\end{eqnarray*}
where
\begin{equation*}
 \tilde{C}_k^{\ref{EstiParametrixHmeasure}}  = (C_{k-1}^{\ref{EstiParametrixHmeasure}}\tilde{C}+ C\tilde{C}_{k-1}^{\ref{EstiParametrixHmeasure}}) \beta((k-1)\bgamma_a/2,\bgamma_a/2).
\end{equation*}

We then have
\begin{eqnarray}
&&\p_\mu H^{\otimes k+1}(t,\mu;s',y';s,y)(z)\label{tobound}\\
&&= \int_{s'}^s \int_{\R^d} \p_\mu H^{\otimes k}(t,\mu;r,u;s,y)(z)H^{}(t,\mu;s',y';r,u) \d u \d r\notag\\
&& \quad +  \int_{s'}^{s} \int_{\R^d} H^{\otimes k}(t,\mu;r,u;s,y)\p_\mu H^{}(t,\mu;s',y';r,u)(z) \d u\d r.\notag
\end{eqnarray}
We can bound the first term in the right hand side by using the induction hypothesis above, the estimation \eqref{rapHkpu} on $H$ and the property of the Gaussian convolution:

\begin{eqnarray*}
&&\left|\int_{s'}^s \int_{\R^d} \p_\mu H^{\otimes k}(t,\mu;r,u;s,y)(z)H^{}(t,\mu;s',y';r,u) \d u \d r\right|\\ 
&&\leq  C_{\ref{paramclassique}}\tilde{C}_k^{\ref{EstiParametrixHmeasure}} \int_{s'}^s  \sum_{i=1}^2 (s'-t)^{(\alpha_i-1)/2}\overline{\p_\mu v}_{(\alpha_i-1)/2}^i(\mu;t,r)(z)   (s-r)^{k\bgamma_a/2-1} (r-s')^{\bgamma_a/2-1} \d r \hat{p}_c(s',y';s,y)\\
&&\leq  C_{\ref{paramclassique}}\tilde{C}_k^{\ref{EstiParametrixHmeasure}} \sum_{i=1}^2 (s'-t)^{(\alpha_i-1)/2}\overline{\p_\mu v}_{(\alpha_i-1)/2}^i(\mu;t,s')(z) \hat{p}_c(s',y';s,y) \int_{s'}^s    (s-r)^{k\bgamma_a/2-1}(r-s')^{\bgamma_a/2-1}  \d r.
\end{eqnarray*}
By the change of variable $r=(s-s')r'+s'$, one can show that

\begin{equation*}
\int_{s'}^s    (s-r)^{k\bgamma_a/2-1}(r-s')^{\bgamma_a/2-1}  \d r \leq (s-s')^{(k+1)\bgamma_a/2-1} \beta(k\bgamma_a/2,\bgamma_a/2),
\end{equation*}
so that

\begin{eqnarray}
&&\left|\int_{s'}^s \int_{\R^d} \p_\mu H^{\otimes k}(t,\mu;r,u;s,y)(z)H^{}(t,\mu;s',y';r,u) \d u \d r\right|\label{tobound1}\\
&& \leq  C_{\ref{paramclassique}}\tilde{C}_k^{\ref{EstiParametrixHmeasure}} \sum_{i=1}^2 \overline{\p_\mu v}_{(\alpha_i-1)/2}^i(\mu;t,s') (z) \hat{p}_c(s',y';s,y) (s-s')^{(k+1)\bgamma_a/2-1} \beta(k\bgamma_a/2,\bgamma_a/2)\notag.
\end{eqnarray}

Now, we bound the second term in the right hand side of \eqref{tobound}. We have from Lemma \ref{paramclassique}:
\begin{equation*}
\left|H^{\otimes k}(t,\mu;r,u;s,y)\right|\leq C_k^{\ref{paramclassique}} (s-r)^{k\bgamma_a/2-1} \hat{p}_c(r,u;s,y),
\end{equation*}
where

\begin{equation*}
C_k^{\ref{paramclassique}} = C_{\ref{paramclassique}}^k \prod_{l=1}^{k-1}\beta(r\bgamma_a/2,\bgamma_a/2).
\end{equation*}
So that, thanks to estimate \eqref{eq:estidepreuvelemme}:

\begin{eqnarray}
&&\left|\int_{s'}^{s} \int_{\R^d} H^{\otimes k}(t,\mu;r,u;s,y)\p_\mu H^{}(t,\mu;s',y';r,u)(z) \d u\d r\right| \label{tobound2}\\
&&\quad \leq  \sum_{i=1}^2(s'-t)^{(\alpha_i-1)/2} \overline{\p_\mu v}_{(\alpha_i-1)/2}^i(\mu;t,s')(z) C_k^{\ref{paramclassique}}\tilde{C} \int_{s'}^s    (s-r)^{k\bgamma_a/2-1}(r-s')^{\bgamma_a/2-1}  \d r\notag.
\end{eqnarray}

Hence, by plugging \eqref{tobound1} and \eqref{tobound2} in \eqref{tobound} we get:
\begin{eqnarray*}
&&\left|\p_\mu H^{\otimes k+1}(t,\mu;s',y';s,y)(z) \right|\\
&&\leq (s-s')^{(k+1)\bgamma_a/2-1}  \sum_{i=1}^2 (s'-t)^{(\alpha_i-1)/2}\overline{\p_\mu v}_{(\alpha_i-1)/2}^i(\mu;t,s')(z)  (C_k^{\ref{paramclassique}}\tilde{C}+ C\tilde{C}_k^{\ref{EstiParametrixHmeasure}}) \beta(k\bgamma_a/2,\bgamma_a/2),
\end{eqnarray*}
and the induction is true since:

\begin{equation*}
\tilde{C}_{k+1}^{\ref{EstiParametrixHmeasure}} = (C_k^{\ref{paramclassique}}\tilde{C}+ C\tilde{C}_k^{\ref{EstiParametrixHmeasure}}) \beta(k\bgamma_a/2,\bgamma_a/2).\\
\end{equation*}
\end{proof}

\begin{proof}[Proof of Lemma \ref{prop:estifroedensity1}.]
We begin with the following Claim.
\begin{claim}\label{EstiDerivMoyVar}
The following estimates hold:
\begin{enumerate}[$\bullet$]
\item $\displaystyle \big|\p_\mu m_{t,s}^{\xi}(t,\mu)(z)\big|\leq (s-t)^{(\alpha_1+1)/2}   ||\p_3b||_{\infty} \overline{\p_\mu v}^1_{(1-\alpha_1)/2}(\mu;t,s)(z)$,
\item $\displaystyle \big|\p_\mu a_{t,s}^{\xi}(t,\mu)(z)\big| \leq  (s-t)^{(\alpha_2+1)/2}  ||\p_3a||_{\infty} \overline{\p_\mu v}^2_{(1-\alpha_2)/2}(\mu;t,s)(z) $,
\end{enumerate}
and for all $s' \in (t,T]$:
\begin{enumerate}[$\bullet$]
\item $\displaystyle \big|\p_\mu m_{s',s}^{\xi}(t,\mu)(z)\big|\leq (s-s')(s'-t)^{(\alpha_1-1)/2}   ||\p_3b||_{\infty} \overline{\p_\mu v}^1_{(1-\alpha_1)/2}(\mu;t,s)(z)$,
\item $\displaystyle \big|\p_\mu a_{t,s}^{\xi}(t,\mu)(z)\big| \leq  (s-s')(s'-t)^{(\alpha_2-1)/2}  ||\p_3a||_{\infty} \overline{\p_\mu v}^2_{(1-\alpha_2)/2}(\mu;t,s)(z) $.
\end{enumerate}
\end{claim}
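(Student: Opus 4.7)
The plan is to differentiate the explicit time integrals defining $m_{s',s}^\xi(t,\mu)$ and $a_{s',s}^\xi(t,\mu)$ under the integral sign, push the derivative through $b$ (resp.\ $a$) by the chain rule, and then absorb the resulting $\p_\mu v_r$ into the definition of $\overline{\p_\mu v}$. Concretely, since $b(r,\xi,\cdot)$ is $C^1$ with $\|\p_3 b\|_\infty<\infty$ under $\mathbf{(H\mathcal{E}R)}$ and $r\mapsto \p_\mu v_r(t,\mu,\varphi_1)(z)$ is integrable on $(s',s]$ by Proposition \ref{prop:estiv}, one may differentiate under the integral to obtain
\begin{equation*}
\p_\mu m_{s',s}^\xi(t,\mu)(z) = \int_{s'}^s \p_3 b\bigl(r,\xi,v_r(t,\mu,\varphi_1)\bigr)\, \p_\mu v_r(t,\mu,\varphi_1)(z)\, \d r,
\end{equation*}
and similarly for $a_{s',s}^\xi$ with $\p_3 a$ and $\p_\mu v_r(t,\mu,\varphi_2)(z)$ in place of $\p_3 b$ and $\p_\mu v_r(t,\mu,\varphi_1)(z)$.

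Next I would bound $|\p_3 b|$ by $\|\p_3 b\|_\infty$ (resp.\ $|\p_3 a|$ by $\|\p_3 a\|_\infty$) and use the definition of $\overline{\p_\mu v}^i$ to write, for every $r\in(t,s]$,
\begin{equation*}
|\p_\mu v_r(t,\mu,\varphi_i)(z)| \leq (r-t)^{(\alpha_i-1)/2}\, \overline{\p_\mu v}^i_{(\alpha_i-1)/2}(\mu;t,s)(z),
\end{equation*}
so that each estimate reduces to a one-dimensional integral of $(r-t)^{(\alpha_i-1)/2}$ over $[s',s]$.

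For the first pair of bullets (the case $s'=t$), the integrand is integrable near $r=t$ since $\alpha_i>0$, and a direct computation gives $\int_t^s (r-t)^{(\alpha_i-1)/2}\, \d r = \tfrac{2}{\alpha_i+1}(s-t)^{(\alpha_i+1)/2}$, yielding the stated exponent $(\alpha_i+1)/2$. For the second pair of bullets (general $s'\in(t,T]$), the map $r\mapsto (r-t)^{(\alpha_i-1)/2}$ is non-increasing on $[s',s]$, so its maximum is attained at $r=s'$ and one can simply bound the integral by $(s-s')(s'-t)^{(\alpha_i-1)/2}$, producing the claimed factorisation $(s-s')(s'-t)^{(\alpha_i-1)/2}$. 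The proofs for the variance integrals are identical after replacing $\varphi_1$ by $\varphi_2$ and $\p_3 b$ by $\p_3 a$; I would only need to note that Proposition \ref{prop:estiv} applies to both $\varphi_1$ and $\varphi_2$ (which are $\alpha_1$-Hölder and $\alpha_2$-Hölder respectively under $\mathbf{(H\mathcal{E}R)}$).

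There is no substantial obstacle here: the sole technical point is the justification of differentiation under the integral, which follows routinely from the smoothness of the coefficients under the regularised hypotheses $\mathbf{(H\mathcal{E}R)}$ together with the integrable control $(r-t)^{(\alpha_i-1)/2}$ on $|\p_\mu v_r|$ already established in Proposition \ref{prop:estiv}.
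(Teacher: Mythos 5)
Your proof is correct and follows exactly the same route as the paper's: differentiate under the integral and apply the chain rule, bound $|\p_3 b|$ (resp.\ $|\p_3 a|$) by its sup-norm, pull out $\overline{\p_\mu v}^i_{(\alpha_i-1)/2}$, and then estimate $\int_{s'}^s (r-t)^{(\alpha_i-1)/2}\,\d r$ either by direct integration (when $s'=t$) or by bounding the non-increasing integrand at $r=s'$ (when $s'>t$). The only point worth noting is that in the $s'=t$ case the true antiderivative produces a constant $\tfrac{2}{\alpha_i+1}\geq 1$, so the inequality as literally stated in the paper is off by this harmless constant; your computation makes this visible, which is a small improvement in precision rather than a gap.
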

We also recall the classical estimate coming from the uniform ellipticity of $a$: there exists $\Lambda>0$ such that, for all positive $\gamma$:
\begin{equation}
\frac{1}{\big[a_{s',s}^{\xi}(t,\mu)\big]^{\gamma}} < \Lambda^{-\gamma} (s-s')^{-\gamma}.
\end{equation}

The derivative of $\tp$ evaluated at any point $z$ in $\R$ is then given by:
\begin{eqnarray*}
&&\p_\mu \tp^{\xi}(t,\mu;s',y';s,y)(z) \\
&& = \Bigg(-\frac{1}{2} \frac{\p_\mu  a_{s',s}^{\xi}(t,\mu)(z)}{\big[a_{s',s}^{\xi}(t,\mu)\big]^{}} + \Bigg(\frac{1}{2}\frac{\big(\p_\mu  a_{s',s}^{\xi}(t,\mu)(z)\big)\big(y-y'-m_{s',s}^{\xi}(t,\mu)\big)}{\big[a_{s',s}^{\xi}(t,\mu)\big]^{3/2}} + \frac{\p_\mu m_{s',s}^{\xi}(t,\mu)(z)}{\big[a_{s',s}^{\xi}(t,\mu)\big]^{1/2}}\Bigg)\\
&&\quad \times  \Bigg(\frac{y-y'-m_{s',s}^{\xi}(t,\mu)}{\big[a_{t,s}^{\xi}([X^{t,\mu}])\big]^{1/2}}\Bigg)\Bigg) \frac{1}{\sqrt{2\pi}}\frac{1}{\big[a_{s',s}^{\xi}(t,\mu)\big]^{1/2}} \exp\l(-\frac{1}{2}\bigg|\big[a_{s',s}^{\xi}(t,\mu)\big]^{-1/2}\big(y-y'-m_{s',s}^{\xi}(t,\mu)\big)\bigg|^2 \r).
\end{eqnarray*}
Now, by using the Gaussian decay of $\tp$, and estimates of Claim \ref{EstiDerivMoyVar}, we obtain that, when $s'=t$:

\begin{eqnarray*}
&&\big|\p_\mu \tp^{\xi}(t,\mu;t,y';s,y)(z)\big| \\
&& \leq C\bigg(\Lambda^{-1}||\p_3a||_{\infty} \sup_{r\in[t,s]}\overline{\p_\mu v}^2_{(1-\alpha_2)/2}(\mu;t,s)(z) (s-t)^{(\alpha_2-1)/2}\\
&&\quad + \Lambda^{-1}||\p_3b||_{\infty} \sup_{r\in[t,s]}\overline{\p_\mu v}^1_{(1-\alpha_1)/2}(\mu;t,s)(z) (s-t)^{(\alpha_1-1)/2}\bigg)\\
&& \qquad \frac{1}{\sqrt{2\pi}}\frac{1}{\big[a_{t,s}^{\xi}(t,\mu)\big]^{1/2}} \exp\l(-c\frac{1}{2}\bigg|\big[a_{t,s}^{\xi}(t,\mu)\big]^{-1/2}\big(y-y'-m_{t,s}^{\xi}(t,\mu)\big)\bigg|^2 \r),
\end{eqnarray*}
for some positive constant $C$ and $c$, with $c$ strictly less than $1$. We now compute the space derivatives: for all $s'<s$ in $[t,T]^2$

\begin{eqnarray*}
&&\p_x \tp^{\xi}(t,\mu;s',y';s,y) \\
&&=  \frac{\big(y-y'-m_{s',s}^{\xi}(t,\mu)\big)}{\big[a_{s',s}^{\xi}(t,\mu)\big]} \frac{1}{\sqrt{2\pi }}\frac{1}{\big[a_{s',s}^{\xi}(t,\mu)\big]^{1/2}} \exp\l(-c\frac{1}{2}\bigg|\big[a_{s',s}^{\xi}(t,\mu)\big]^{-1/2}\big(y-y'-m_{s',s}^{\xi}(t,\mu)\big)^*\bigg|^2 \r),
\end{eqnarray*}
which gives

\begin{eqnarray*}
|\p_x \tp^{\xi}(t,\mu;s',y';s,y)| \leq   C (s-t)^{-1/2} \frac{1}{\sqrt{2\pi }}\frac{1}{\big[a_{s',s}^{\xi}(t,\mu)\big]^{1/2}} \exp\l(-\frac{1}{2}\bigg|\big[a_{s',s}^{\xi}(t,\mu)\big]^{-1/2}\big(y-y'-m_{s',s}^{\xi}(t,\mu)\big)^*\bigg|^2 \r).
\end{eqnarray*}

Next

\begin{eqnarray*}
\p^2_{x} \tp^{\xi}(t,\mu;s',x;s,y) &=&  \left(\frac{\big(y-y'-m_{s',s}^{\xi}(t,\mu)\big)^2}{\big[a_{s',s}^{\xi}(t,\mu)\big]^2} - \frac{1}{\big[a_{s',s}^{\xi}(t,\mu)\big]}\right)\\
&&\times \frac{1}{\sqrt{2\pi }}\frac{1}{\big[a_{s',s}^{\xi}(t,\mu)\big]^{1/2}} \exp\l(-\frac{1}{2}\bigg|\big[a_{s',s}^{\xi}(t,\mu)\big]^{-1/2}\big(y-y'-m_{s',s}^{\xi}(t,\mu)\big)^*\bigg|^2 \r),
\end{eqnarray*}
so that

\begin{eqnarray*}
&&\p^2_{x} \tp^{\xi}(t,\mu;s',y';s,y) \\
&&\leq  C(1 + \Lambda^{-1}) (s-s')^{-1} \frac{1}{2\pi } \frac{1}{\big[a_{s',s}^{\xi}(t,\mu)\big]^{1/2}} \exp\l(-\frac{1}{2}\bigg|\big[a_{s',s}^{\xi}(t,\mu)\big]^{-1/2}\big(y-y'-m_{s',s}^{\xi}(t,\mu)\big)^*\bigg|^2 \r).
\end{eqnarray*}

Concerning the cross derivatives, we have, at any point $z$ of $\R$:
\begin{eqnarray*}
&&\p_\mu\p_x \tp^{\xi}(t,\mu;s',y';s,y)(z) \\
&&=  \Bigg\{-\frac{\big(\p_\mu m_{s',s}^{\xi}(t,\mu)(z)\big)}{\big[a_{s',s}^{\xi}(t,\mu)\big]^{}} - \frac{3}{2}\frac{\big(\p_\mu a_{s',s}^{\xi}(t,\mu)(z)\big)\big(y-y'-m_{s',s}^{\xi}(t,\mu)\big)}{\big[a_{s',s}^{\xi}(t,\mu)\big]^{2}}\\
&&\quad  -\frac{1}{2} \frac{\p_\mu  a_{s',s}^{\xi}(t,\mu)(z)}{\big[a_{s',s}^{\xi}(t,\mu)\big]^{}} +  \Bigg(\frac{1}{2}\frac{\big(\p_\mu  a_{s',s}^{\xi}(t,\mu)(z)\big)\big(y-x-m_{t,s}^{\xi}([X^{t,\mu}])\big)}{\big[a_{t,s}^{\xi}([X^{t,\mu}])\big]^{3/2}} + \frac{\p_\mu m_{s',s}^{\xi}(t,\mu)(z)}{\big[a_{s',s}^{\xi}(t,\mu)\big]^{1/2}}\Bigg)\\
&&\quad \times  \Bigg(\frac{y-y'-m_{s',s}^{\xi}(t,\mu)}{\big[a_{s',s}^{\xi}(t,\mu)\big]^{1/2}}\Bigg)\Bigg)\Bigg\}\\
&& \qquad \frac{1}{\sqrt{2\pi }}\frac{1}{\big[a_{s',s}^{\xi}(t,\mu)\big]^{1/2}}\exp\l(-\frac{1}{2}\bigg|\big[a_{s',s}^{\xi}(t,\mu)\big]^{-1/2}\big(y-y'-m_{s',s}^{\xi}(t,\mu)\big)^*\bigg|^2 \r).
\end{eqnarray*}
Thus we have: when $s'=t$, 

\begin{eqnarray*}
&&|\p_\mu\p_x \tp^{\xi}(t,\mu;t,y';s,y)(z)| \\
&&\leq  \Big(\bigg((s-t)^{(\alpha_1-1)/2}+ (s-t)^{\alpha_1}\Big)||\p_3b||_{\infty} \overline{\p_\mu v}^1_{(1-\alpha_1)/2}(\mu;t,s)(z)\\
&& \quad +\Big((s-t)^{\alpha_2/2-1} + 2(s-t)^{(\alpha_2-1)/2} \Big) ||\p_3a||_{\infty}\overline{\p_\mu v}^2_{(1-\alpha_2)/2}(\mu;t,s)(z)\Bigg)\\
&&\qquad \times \frac{C\Lambda^{-1}}{\big[a_{t,s}^{\xi}([X^{t,\mu}])\big]^{1/2}}\exp\l(-c\frac{1}{2}\bigg|\big[a_{t,s}^{\xi}([X^{t,\mu}])\big]^{-1/2}\big(y-y'-m_{t,s}^{\xi}([X^{t,\mu}])\big)^*\bigg|^2 \r),
\end{eqnarray*}
and when $s'>t$,
\begin{eqnarray*}
&&|\p_\mu\p_x \tp^{\xi}(t,\mu;s',y';s,y)(z)| \\
&&\leq  \Bigg(\Big((s'-t)^{(\alpha_1-1)/2}+  (s'-t)^{(\alpha_1-1)/2}\Big)||\p_3b||_{\infty} \overline{\p_\mu v}^1_{(1-\alpha_1)/2}(\mu;t,s)(z)\\
&& \quad + \Big((s'-t)^{(\alpha_2-1)/2} + 2(s'-t)^{(\alpha_2-1)/2} \Big)||\p_3a||_{\infty}\overline{\p_\mu v}^2_{(\alpha_2-1)/2}(\mu;t,s)(z)\Bigg)\\
&&\qquad \times \frac{C(s-s')^{-(1/2)}}{\big[a_{s',s}^{\xi}(t,\mu)\big]^{1/2}}\exp\l(-c\frac{1}{2}\bigg|\big[a_{s',s}^{\xi}(t,\mu)\big]^{-1/2}\big(y-y'-m_{s',s}^{\xi}(t,\mu)\big)^*\bigg|^2 \r).
\end{eqnarray*}
We conclude with:

\begin{eqnarray*}
&&\p_\mu \p^2_{x} \tp^{\xi}(t,\mu;s',y';s,y)(z) \\
&&=  \Bigg\{\Bigg(\frac{2\p_\mu m_{s',s}^\xi([X^{t,\mu}])(z)(y-y'-m_{s',s}^\xi([X^{t,\mu}]))}{\big[a_{s',s}^{\xi}(t,\mu)\big]^{2}} - \frac{2\big(y-y'-m_{s',s}^{\xi}(t,\mu)\big)^2\p_\mu a_{s',s}^{\xi}(t,\mu)(z) }{\big[a_{s',s}^{\xi}(t,\mu)\big]^{3}} \\
&&+ \frac{\p_\mu a_{s',s}^{\xi}(t,\mu)(z)}{\big[a_{s',s}^{\xi}(t,\mu)\big]^{2}}\Bigg) +\left(\frac{\big(y-y'-m_{s',s}^{\xi}(t,\mu)\big)^2}{\big[a_{s',s}^{\xi}(t,\mu)\big]^{3/2}} - \frac{1}{\big[a_{s',s}^{\xi}(t,\mu)\big]^{1/2}}\right)\left(-\frac{\p_\mu a_{s',s}^{\xi}(t,\mu)(z)}{2\big[a_{s',s}^{\xi}(t,\mu)\big]^{3/2}}\right) \\
&&+\left(\frac{\big(y-y'-m_{s',s}^{\xi}(t,\mu)\big)^2}{\big[a_{s',s}^{\xi}(t,\mu)\big]^2} - \frac{1}{\big[a_{s',s}^{\xi}(t,\mu)\big]}\right)\\
&&\quad \times \Bigg(-\frac{1}{2} \frac{\p_\mu  a_{s',s}^{\xi}(t,\mu)(z)}{\big[a_{s',s}^{\xi}(t,\mu)\big]^{}} + \Bigg(\frac{1}{2}\frac{\big(\p_\mu  a_{s',s}^{\xi}(t,\mu)(z)\big)\big(y-y'-m_{s',s}^{\xi}(t,\mu)\big)}{\big[a_{s',s}^{\xi}(t,\mu)\big]^{3/2}} + \frac{\p_\mu m_{s',s}^{\xi}(t,\mu)(z)}{\big[a_{s',s}^{\xi}(t,\mu)\big]^{1/2}}\bigg) \\
&&\qquad \times \bigg(\frac{y-y'-m_{s',s}^{\xi}(t,\mu)}{\big[a_{s',s}^{\xi}(t,\mu)\big]^{1/2}}\Bigg)\Bigg)\Bigg\}\\
&&\times \frac{1}{2\pi } \frac{1}{\big[a_{s',s}^{\xi}(t,\mu)\big]^{1/2}}  \exp\l(-\frac{1}{2}\bigg|\big[a_{s',s}^{\xi}(t,\mu)\big]^{-1/2}\big(y-y'-m_{s',s}^{\xi}(t,\mu)\big)^*\bigg|^2 \r).
\end{eqnarray*}

Which gives, for all $s'>t$,
\begin{eqnarray*}
&&\left|\p_\mu \p^2_{x} \tp^{\xi}(\mu;t,x;s,y)(z)\right| \\
&& \leq \Bigg(||\p_3b||_{\infty}(s-s')^{-1/2}(s'-t)^{(\alpha_1-1)/2}  \overline{\p_\mu v}^1_{(1-\alpha_1)/2}(\mu;t,s)(z)\\
&&+ ||\p_3a||_{\infty}\Big((s-s')^{-1} +(s-s')^{-1/2} \Big)(s'-t)^{(\alpha_2-1)/2} \overline{\p_\mu v}^2_{(1-\alpha_2)/2}(\mu;t,s)(z)\Bigg)\\
&&\times\frac{1}{\sqrt{2\pi }} \frac{1}{\big[a_{s',s}^{\xi}(t,\mu)\big]^{1/2}}  \exp\l(-\frac{1}{2}\bigg|\big[a_{s',s}^{\xi}(t,\mu)\big]^{-1/2}\big(y-y'-m_{s',s}^{\xi}(t,\mu)\big)^*\bigg|^2 \r).
\end{eqnarray*}

\end{proof}

\begin{proof}[Proof of Claim \ref{EstiDerivMoyVar}]
From chain rule we have, for all $\xi$ in $\R$ and for all $s'<s \in [t,T]$:
\begin{eqnarray*}
\p_\mu m_{s',s}^{\xi}(t,\mu)(z) &=& \bigg(\p_\mu \int_{s'}^s b(r,\xi,\langle \varphi_1,[X_r^{t,\mu})]\rangle) \d r\bigg)(z) \\
&=&\int_{s'}^s \p_3b(r,\xi,\langle \varphi_1,[X_r^{t,\mu})]\rangle)\p_\mu v_r(t,\mu,\varphi_1)(z) \d r, 
\end{eqnarray*}
for all $z$ in $\R$ and where we recall that $v$ is defined by \eqref{def:v}. So,
\begin{eqnarray*}
\big|\p_\mu m_{s',s}^{\xi}(t,\mu)(z)\big| &\leq& ||\p_3 b||_{\infty} \sup_{r \in [t,s]}\big\{(r-t)^{(1-\alpha_1)/2}|\p_\mu v_r(t,\mu,\varphi_1)(z)|\big\} \int_{s'}^s  (r-t)^{(\alpha_1-1)/2}   \d r.
\end{eqnarray*}
Hence, for all $z$ in $\R$
\begin{eqnarray*}
\big|\p_\mu m_{s',s}^{\xi}(t,\mu)(z)\big| &\leq & ||\p_3 b||_{\infty} \sup_{r \in [t,s]}\big\{(r-t)^{(1-\alpha_1)/2}|\p_\mu v_r(t,\mu,\varphi_1)(z)|\big\}  (s-t)^{(\alpha_1+1)/2},
\end{eqnarray*}
when $s'=t$ and 
\begin{eqnarray*}
\big|\p_\mu m_{s',s}^{\xi}(t,\mu)(z)\big| &\leq & ||\p_3 b||_{\infty} \sup_{r \in [t,s]}\big\{(r-t)^{(1-\alpha_1)/2}|\p_\mu v_r(t,\mu,\varphi_1)(z)|\big\} (s'-s) (s'-t)^{(\alpha_1-1)/2},
\end{eqnarray*}
when $s'>t$. Next we have,
\begin{eqnarray*}
\p_\mu a_{s',s}^{\xi}(t,\mu)(z) &=& \bigg(\p_\mu \int_{s'}^s a(r,\xi,\langle \varphi_2,[X_r^{t,\mu})]\rangle) \d r\bigg)(z) \\
&=&\int_{s'}^s \p_3a(r,\xi,\langle \varphi_2,[X_r^{t,\mu})]\rangle)\p_\mu v_r(t,\mu,\varphi_2)(z) \d r, 
\end{eqnarray*}
for all $z$ in $\R$. Therefore
\begin{eqnarray*}
\big|\p_\mu a_{s',s}^{\xi}(t,\mu)(z) \big| &\leq& ||\p_3a ||_{\infty} \sup_{r \in [t,s]}\big\{(r-t)^{(1-\alpha_2)/2}|\p_\mu v_r(t,\mu,\varphi_2)(z)|\big\} \int_t^s  (r-t)^{(\alpha_2-1)/2}   \d r.
\end{eqnarray*}
So
\begin{eqnarray*}
\big|\p_\mu a_{s',s}^{\xi}(t,\mu)(z) \big| &\leq & ||\p_3 a||_{\infty} \sup_{r \in [t,s]}\big\{(r-t)^{(1-\alpha_2)/2}|\p_\mu v_r(t,\mu,\varphi_2)(z)|\big\}  (s-t)^{(\alpha_2+1)/2},
\end{eqnarray*}
when $s'=t$ and
\begin{eqnarray*}
\big|\p_\mu a_{s',s}^{\xi}(t,\mu)(z) \big| &\leq & ||\p_3 a||_{\infty} \sup_{r \in [t,s]}\big\{(r-t)^{(1-\alpha_2)/2}|\p_\mu v_r(t,\mu,\varphi_2)(z)|\big\}  (s'-s)(s'-t)^{(\alpha_2-1)/2},
\end{eqnarray*}
when $s'>t$.
\end{proof}

\section{Asymptotic properties of the parametrix constants}\label{APPC}
\begin{claim}\label{asymptobeta}
There exists a strictly finite and strictly positive integer $K(\gamma_a)$  such that for all $k \geq K(\gamma_a)$:
$$C_k^{\ref{paramclassique}} = C(K(\gamma_a))C_{\ref{paramclassique}}^k\frac{4^{k}}{\gamma_a^k(k!)^{\gamma_a/2}},$$
where $C(K(\gamma_a)) = 4^{-K(\gamma_a)}(K(\gamma_a)!)^{\gamma_a/2}\prod_{l=1}^{K(\gamma_a)-1}\beta(l\gamma_a/2,\gamma_a/2).$
\end{claim}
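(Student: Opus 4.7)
The plan is to exploit the telescoping structure of the $\beta$-function product in \eqref{defck} together with a sharp per-factor control, chosen so that the cumulative product precisely reproduces the $4^{k}/(\gamma_a^{k}(k!)^{\gamma_a/2})$ decay on the right-hand side. First, the identity $\beta(l\gamma_a/2,\gamma_a/2)=\Gamma(l\gamma_a/2)\Gamma(\gamma_a/2)/\Gamma((l+1)\gamma_a/2)$ makes the whole product telescope into a single Gamma-ratio,
$$\prod_{l=1}^{k-1}\beta(l\gamma_a/2,\gamma_a/2)=\frac{\Gamma(\gamma_a/2)^{k}}{\Gamma(k\gamma_a/2)},$$
so that $C_{k}^{\ref{paramclassique}}=C_{\ref{paramclassique}}^{k}\,\Gamma(\gamma_a/2)^{k}/\Gamma(k\gamma_a/2)$, and the task reduces to identifying this Gamma-ratio with $C(K(\gamma_a))\,4^{k}/(\gamma_a^{k}(k!)^{\gamma_a/2})$ once the initial segment of the product is peeled off into $C(K(\gamma_a))$.

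The core step is the per-factor control
$$\beta(l\gamma_a/2,\gamma_a/2)\le \frac{4}{\gamma_a\,(l+1)^{\gamma_a/2}}\quad \text{for every } l\ge K(\gamma_a),$$
which the plan is to derive from the Gautschi--Wendel bound
$$\frac{\Gamma(l\gamma_a/2)}{\Gamma((l+1)\gamma_a/2)}\le (l\gamma_a/2)^{-\gamma_a/2}\Bigl(1+\frac{1}{l}\Bigr)^{1-\gamma_a/2},$$
combined with the elementary uniform inequality $\gamma_a\,\Gamma(\gamma_a/2)(2/\gamma_a)^{\gamma_a/2}\le 2\sqrt{2}<4$ on $\gamma_a\in(0,1]$ (the first factor equals $2\Gamma(1+\gamma_a/2)\le 2$; the second is bounded by optimizing $t\mapsto(2/t)^{t/2}$). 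Since the residual error $(1+1/l)^{1-\gamma_a/2}\downarrow 1$ is absorbed by the strictly positive margin $4-2\sqrt{2}$, one can take $K(\gamma_a)\ge 1$ to be the smallest integer for which the per-factor control is satisfied; this $K(\gamma_a)$ is finite and strictly positive, as announced.

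It then remains to split \eqref{defck} at $K(\gamma_a)$ and apply the per-factor bound to the tail. Using the telescoping $\prod_{l=K(\gamma_a)}^{k-1}(l+1)^{\gamma_a/2}=(k!/K(\gamma_a)!)^{\gamma_a/2}$ gives
$$C_{k}^{\ref{paramclassique}}\le C_{\ref{paramclassique}}^{k}\Bigl(\prod_{l=1}^{K(\gamma_a)-1}\beta(l\gamma_a/2,\gamma_a/2)\Bigr)\frac{(4/\gamma_a)^{k-K(\gamma_a)}(K(\gamma_a)!)^{\gamma_a/2}}{(k!)^{\gamma_a/2}},$$
and regrouping the $K(\gamma_a)$-dependent prefactor $4^{-K(\gamma_a)}(K(\gamma_a)!)^{\gamma_a/2}\prod_{l=1}^{K(\gamma_a)-1}\beta(l\gamma_a/2,\gamma_a/2)$ as $C(K(\gamma_a))$ produces exactly the form $C(K(\gamma_a))\,C_{\ref{paramclassique}}^{k}\,4^{k}/(\gamma_a^{k}(k!)^{\gamma_a/2})$ stated in the claim.

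The main obstacle will be the per-factor step. The constant $4/\gamma_a$ is essentially sharp: the margin $4-2\sqrt{2}\approx 1.17$ must absorb the Wendel correction $(1+1/l)^{1-\gamma_a/2}-1$, and this margin tightens as $\gamma_a\to 1^-$, forcing $K(\gamma_a)$ to grow in that regime. Making the threshold $K(\gamma_a)$ explicit uniformly in the admissible range $\gamma_a\in(0,1]$, and checking that the announced closed form of $C(K(\gamma_a))$ precisely repackages all residual constants from the initial segment, is the one piece of bookkeeping that requires care.
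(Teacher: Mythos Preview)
Your argument is correct in substance but takes a different route from the paper. The paper obtains the key per-factor estimate $\beta(k\gamma_a/2,\gamma_a/2)\le 4/(\gamma_a k^{\gamma_a/2})$ by an entirely elementary splitting of the beta integral $\int_0^1 (1-s)^{\gamma_a/2-1}s^{k\gamma_a/2-1}\,\d s$ at the point $s=1-1/k$: on $[0,1-1/k]$ one bounds $(1-s)^{\gamma_a/2-1}\le k^{1-\gamma_a/2}$, on $[1-1/k,1]$ one bounds $s^{k\gamma_a/2-1}\le 1$, and both pieces contribute $2/(\gamma_a k^{\gamma_a/2})$. The only requirement is $k\gamma_a/2-1\ge 0$, whence the explicit choice $K(\gamma_a)=\lceil 2/\gamma_a\rceil$. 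Your approach instead passes through Wendel's inequality for Gamma ratios together with a uniform bound on $\gamma_a\Gamma(\gamma_a/2)(2/\gamma_a)^{\gamma_a/2}$. Both land on the same per-factor decay and the same product telescoping, so both are sound; the paper's version is more self-contained, while yours yields the slightly sharper denominator $(l+1)^{\gamma_a/2}$, which is what makes the $k!$ (rather than $(k-1)!$) appear exactly as in the claim.

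Two minor remarks on your write-up. First, your opening telescoping identity $\prod_{l=1}^{k-1}\beta(l\gamma_a/2,\gamma_a/2)=\Gamma(\gamma_a/2)^{k}/\Gamma(k\gamma_a/2)$ is correct but not actually used afterwards; the proof you carry out is the per-factor bound followed by the product, just as in the paper. Second, your closing worry is misplaced: in your route the absorption condition reduces to $\gamma_a\Gamma(\gamma_a/2)(2/\gamma_a)^{\gamma_a/2}(1+1/l)\le 4$, and since the first factor is uniformly below $2\sqrt{2}$ on $(0,1]$, the inequality holds for all $l\ge 3$ regardless of $\gamma_a$. So with your method $K$ can be taken as an absolute constant; it is the paper's choice $K(\gamma_a)=\lceil 2/\gamma_a\rceil$ that grows, and it does so as $\gamma_a\to 0^{+}$, not as $\gamma_a\to 1^{-}$.
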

\begin{proof}
Let $K(\gamma_a)$ be equal to $\lceil 2/\gamma_a \rceil$, so that for all $k \geq K(\gamma_a)$
$$k\gamma_a/2-1\geq 0,$$ 
and recall that

$$\beta(\gamma_a k/2,\gamma_a/2) = \int_0^1(1-s)^{k\gamma_a/2}  s^{\gamma_a/2-1} \d s.$$
For all positive $\epsilon$ strictly less than 1, we have, for all $k\geq K(\gamma_a)$:

\begin{eqnarray}\label{calculint}
\int_0^1 (1-s)^{\gamma_a k/2-1}s^{\gamma_a/2-1} \d s &=& \int_0^1 (1-s)^{\gamma_a/2-1}s^{\gamma_a k/2-1} \d s \\
&=& \int_0^{1-\epsilon} (1-s)^{\gamma_a/2-1}s^{\gamma_a k/2-1} \d s + \int_{1-\epsilon}^1 (1-s)^{\gamma_a/2-1}s^{\gamma_a k/2-1} \d u\notag\\
&\leq & \frac{1}{\epsilon^{1-\gamma_a/2}} \int_0^{1-\epsilon} s^{\gamma_a k/2-1} \d s + \int_{1-\epsilon}^1 (1-s)^{\gamma_a/2-1}\d s\notag\\
&\leq & \frac{1}{\epsilon^{1-\gamma_a/2}}  \frac{2}{k\gamma_a} + \frac{2}{\gamma_a}\epsilon^{\gamma_a/2}.\notag
\end{eqnarray}
So that, by letting $\epsilon= 1/k$ we have
\begin{eqnarray}\label{annexeboundepsi}
\int_0^1 (1-s)^{k/2-1}s^{-1/2} \d s\leq \frac{4}{\gamma_ak^{\gamma_a/2}},
\end{eqnarray}
which gives the desired result.
\end{proof}

\begin{claim}
There exists a strictly finite and strictly positive integer $K : = K(\gamma_a,\gamma_a')$  such that for all $k \geq K$:
\begin{equation*}
\tilde{C}_{k+1}^{\ref{EstiParametrixHmeasure}} = (C_k^{\ref{paramclassique}}\tilde{C}+ C\tilde{C}_k^{\ref{EstiParametrixHmeasure}}) \beta(k\bgamma_a/2,\bgamma_a/2) \leq \tilde{C}(\gamma_a,\gamma_a')\left( (k-K) \frac{\kappa^k}{\bgamma_a^k(k!)^{\bgamma_a/2}} + \frac{(4C)^{k+1}}{\bgamma_a^k (k!)^{\bgamma_a/2}}\right),
\end{equation*}
for some positive real $\kappa \geq 4 C_{\ref{paramclassique}}$ and some positive constant $\tilde{C}(\gamma_a,\gamma_a')$.
\end{claim}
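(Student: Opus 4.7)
The plan is to unroll the linear recursion and then bound the resulting pieces using the asymptotic inputs of Claim \ref{asymptobeta}. Iterating from $k$ down to an integer cut-off $K$, one gets
\[
\tilde{C}_{k+1}^{\ref{EstiParametrixHmeasure}} = \tilde{C}_K^{\ref{EstiParametrixHmeasure}} \prod_{l=K}^{k} C\beta_l + \tilde{C} \sum_{j=K}^{k} C_j^{\ref{paramclassique}}\, \beta_j \prod_{l=j+1}^{k} C\beta_l,
\]
where $\beta_l := \beta(l\bgamma_a/2,\bgamma_a/2)$. The geometric first term will carry the $(4C)^{k+1}$ piece of the claimed bound and the sum will carry the $(k-K)\kappa^k$ piece.

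Before the bookkeeping I would collect two asymptotic inputs. Rerunning the $\epsilon = 1/k$ computation \eqref{calculint}--\eqref{annexeboundepsi} of the proof of Claim \ref{asymptobeta} with $\bgamma_a$ in place of $\gamma_a$ yields an integer $K(\bgamma_a)$ beyond which $\beta_l \leq 4/(\bgamma_a l^{\bgamma_a/2})$; while Claim \ref{asymptobeta} itself, together with $\bgamma_a \leq \gamma_a$, gives
\[
C_j^{\ref{paramclassique}} \leq C(K(\gamma_a))\,\frac{(4C_{\ref{paramclassique}})^j}{\gamma_a^j (j!)^{\gamma_a/2}} \leq C(K(\gamma_a))\,\frac{(4C_{\ref{paramclassique}})^j}{\bgamma_a^j (j!)^{\bgamma_a/2}}
\]
for $j \geq K(\gamma_a)$. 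I would then set $K := \max(K(\bgamma_a), K(\gamma_a))$.

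With these in hand, the telescoping identity $\prod_{l=j+1}^{k} l = k!/j!$ yields $\prod_{l=j+1}^{k} C\beta_l \leq (4C)^{k-j}(j!)^{\bgamma_a/2}/(\bgamma_a^{k-j}(k!)^{\bgamma_a/2})$. Multiplying by the bound on $C_j^{\ref{paramclassique}} \beta_j$ makes the $(j!)^{\bgamma_a/2}$ factors cancel, producing
\[
C_j^{\ref{paramclassique}} \beta_j \prod_{l=j+1}^k C\beta_l \leq \mathrm{cst}\cdot \frac{(4C_{\ref{paramclassique}})^j (4C)^{k-j}}{\bgamma_a^{k+1}\, j^{\bgamma_a/2}\,(k!)^{\bgamma_a/2}}.
\]
Choosing any $\kappa \geq \max(4C_{\ref{paramclassique}}, 4C)$ (which in particular satisfies $\kappa \geq 4C_{\ref{paramclassique}}$) dominates $(4C_{\ref{paramclassique}})^j(4C)^{k-j}$ by the single rate $\kappa^k$ uniformly in $j$; summing the $k-K+1$ comparable terms then produces the $(k-K)\kappa^k/(\bgamma_a^k(k!)^{\bgamma_a/2})$ contribution, after absorbing $1/(\bgamma_a K^{\bgamma_a/2})$ and the off-by-one into the constant. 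Likewise, the homogeneous piece is the same telescoping evaluated at $j = K-1$, and yields the $(4C)^{k+1}/(\bgamma_a^k(k!)^{\bgamma_a/2})$ term.

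The conceptual point from which the shape of the stated bound really comes is that the $j$-dependent geometric competition $(4C_{\ref{paramclassique}})^j(4C)^{k-j}$ inside each summand can be dominated by a single geometric rate $\kappa^k$, so that the sum costs only a linear overhead $k-K$ in the length of the interval, rather than an exponential one. The remaining technicalities---the off-by-one between $k-K+1$ and $k-K$, and the degenerate base case $k = K$ where the sum is empty---are routine and absorbed into the final multiplicative constant $\tilde{C}(\gamma_a,\gamma_a')$.
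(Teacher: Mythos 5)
Your proof is correct and takes essentially the same route as the paper: both unroll the linear recurrence for $\tilde{C}_k^{\ref{EstiParametrixHmeasure}}$, bound each term using the $\beta$-function asymptotics and the estimate on $C_j^{\ref{paramclassique}}$ from Claim~\ref{asymptobeta}, and then dominate the two competing geometric rates $4C_{\ref{paramclassique}}$ and $4C$ by a single $\kappa$. The paper packages the same computation recursively, via the normalized sequence $A_k = \tilde{C}_k^{\ref{EstiParametrixHmeasure}}/k^{\bgamma_a/2}$ and the comparison $M_k D_{k+1}\leq M_{k+1}$, but the underlying argument is identical to your explicit unrolling.
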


\begin{proof}
Let $K(\bgamma_a)$ be equal to $\lceil 2/\bgamma_a \rceil$, so that for all $k\geq K(\bgamma_a)$
$$k\bgamma_a/2-1\geq 0.$$ 
Then, let $k\geq K:= K(\bgamma_a) \vee K(\gamma_a)$. We know from the proof of Claim \ref{asymptobeta} that:
\begin{eqnarray*}
\beta(\bgamma_a k/2,\bgamma_a/2) \leq  \frac{4}{\bgamma_ak^{\bgamma_a/2}}.
\end{eqnarray*}

Hence, from definition of $C_k^{\ref{paramclassique}}$ we have that:
\begin{equation*}
\tilde{C}_{k+1}^{\ref{EstiParametrixHmeasure}}  \leq  C(K(\gamma_a))\frac{\tilde{C}\kappa^k}{\bgamma_a^{k}(k!)^{\bgamma_a/2}} +\tilde{C}_k^{\ref{EstiParametrixHmeasure}}C \frac{4}{\bgamma_ak^{\bgamma_a/2}},
\end{equation*}
so that
\begin{equation*}
\frac{\tilde{C}_{k+1}^{\ref{EstiParametrixHmeasure}}}{(k+1)^{\bgamma_a/2}} \leq C(K(\gamma_a))\frac{\tilde{C}\kappa^{k}}{\bgamma_a^{k}((k+1)!)^{\bgamma_a/2}} + \frac{\tilde{C}_k^{\ref{EstiParametrixHmeasure}}}{k^{\bgamma_a/2}} \frac{4C}{\bgamma_a(k+1)^{\bgamma_a/2}}.
\end{equation*}

Let us define
\begin{eqnarray*}
A_k = \frac{\tilde{C}_k^{\ref{EstiParametrixHmeasure}}}{k^{\gamma_a /2}},\quad M_{k+1} = \frac{C(K(\gamma_a)) \tilde{C}\kappa^k}{\bgamma_a^{k}((k+1)!)^{\bgamma_a/2}},\quad D_{k+1}=\frac{4C}{\bgamma_a ((k+1)!)^{\bgamma_a/2}}.
\end{eqnarray*}
Hence, we have that 
\begin{equation*}
A_{k+1} \leq M_{k+1} + A_k D_{k+1}.
\end{equation*}

Since $\kappa$ is such that $\kappa \geq 4 C_{\ref{paramclassique}}$ we obtain:
\begin{equation*}
M_k D_{k+1} = \frac{4 C_{\ref{paramclassique}}}{\gamma_a ((k+1)!)^{\bgamma_a/2}} \frac{C(K(\gamma_a))\tilde{C}\kappa^{k-1}}{\gamma_a^{k-1}(k!)^{\bgamma_a/2}} \leq \frac{\kappa^k \tilde{C}C(K(\bgamma_a))}{\bgamma_a^k((k+1)!)^{\bgamma_a/2}}=M_{k+1}.
\end{equation*}
Therefore, by induction
% for all $i$:
%
%\begin{equation*}
%M_i D_{k+1} D_{k} D_{i+1}= \frac{C^{k-i}\sqrt{2}^{k-i} \sqrt{i!}}{\sqrt{(k+1)!}} \frac{\gamma^{i-1}\tilde{C}}{\sqrt{i!}} M_{k+1}.
%\end{equation*}
%Hence,
%
\begin{equation*}
A_{k+1}\leq (k-K) M_{k+1} + \prod_{i=K+1}^{k+1} \frac{4 C_{\ref{paramclassique}}}{\bgamma_a i^{\bgamma_a/2}},
\end{equation*}
which implies that 

\begin{equation*}
\frac{\tilde{C}_{k+1}}{(k+1)^{\bgamma_a/2}} \leq (k-K) \frac{C(K(\gamma_a))\tilde{C}\kappa^k}{\bgamma_a^k(k+1)!)^{\bgamma_a/2}} + \frac{(4C)^{k+1}}{\bgamma_a^k((k+1)!)^{\bgamma_a/2}}C(K),
\end{equation*}
where $C(K) = (4C)^{-K}\bgamma_a^K K!$. 
Then,

\begin{equation*}
\tilde{C}_{k+1}\leq  (k-K)C(K(\gamma_a))\tilde{C}  \frac{\kappa^k}{\bgamma_a^{k}(k!)^{\bgamma_a/2}} + C(K)\frac{(4C)^{k+1}}{\bgamma_a^{k}(k!)^{\bgamma_a/2}}.
\end{equation*}

\end{proof}

\bibliographystyle{alpha}
\bibliography{Bib_SMeasure}

\end{document}